
\documentclass[11pt]{amsart}
\usepackage{amsfonts}
\usepackage{amssymb}
\usepackage{slashbox}
\usepackage{hhline}



\def\a{\alpha}       \def\b{\beta}        \def\g{\gamma}
\def\d{\delta}           
\def\la{\lambda}           
\def\vr{\varrho}     \def\s{\sigma}       
\def\f{\varphi}      \def\p{\psi}         \def\z{\zeta}
\def\F{\Phi}         
\def\C{{\mathbb C}}  \def\D{{\mathbb D}}
\def\H{{\mathbb H}}  \def\N{{\mathbb N}}
\def\R{{\mathbb R}}

\def\cg{{\mathcal G}}    \def\ch{{\mathcal H}}

\def\({\left(}       \def\){\right)}
    
\newtheorem{proposition}{\sc Proposition}
\newtheorem{lemma}{\sc Lemma}
\newtheorem{theorem}{\sc Theorem}
\newtheorem{corollary}{\sc Corollary}
\newtheorem{ex}{\sc Example}


\begin{document}
\title[Maps intertwining two linear fractional maps]{Holomorphic
self-maps of the disk intertwining two linear fractional maps}
\author[M. D. Contreras]{Manuel D. Contreras}
\address{Departamento de Matem\'atica Aplicada II, Es\-cue\-la
T\'ec\-ni\-ca Su\-pe\-rior de In\-ge\-nie\-ros, Uni\-ver\-si\-dad
de Se\-vi\-lla, Ca\-mi\-no de los Des\-cu\-bri\-mien\-tos s/n,
41092 Seville, Spain} \email{contreras@us.es}
 \urladdr{http://personal.us.es/contreras}
\author[S. D\'{\i}az-Madrigal]{Santiago D\'{\i}az-Madrigal}
\address{Departamento de Matem\'atica Aplicada II, Es\-cue\-la
T\'ec\-ni\-ca Su\-pe\-rior de In\-ge\-nie\-ros, Uni\-ver\-si\-dad
de Se\-vi\-lla, Ca\-mi\-no de los Des\-cu\-bri\-mien\-tos  s/n,
41092 Seville, Spain} \email{madrigal@us.es}
\author[M. J. Mart\'{\i}n]{Mar\'{\i}a J. Mart\'{\i}n}
\address{Departamento de Matem\'aticas, M\'odulo 17, Edificio de Ciencias, Uni\-ver\-si\-dad Au\-t\'o\-no\-ma de Ma\-drid, 28049
Madrid, Spain}
\email{mariaj.martin@uam.es}
\author[D. Vukoti\'c]{Dragan Vukoti\'c}
\address{Departamento de Matem\'aticas, M\'odulo 17, Edificio de Ciencias, Uni\-ver\-si\-dad Au\-t\'o\-no\-ma de Ma\-drid, 28049
Madrid, Spain}
\email{dragan.vukotic@uam.es}
 \urladdr{http://www.uam.es/dragan.vukotic}
\subjclass[2000]{Primary: 30D05. Secondary: 37F10}
 \keywords{Intertwining, commutation, linear fractional
map, disk automorphism, angular derivative, iteration,
Denjoy-Wolff point}
\date{06 August, 2010}
\thanks{This work is supported by coordinated research grants
from MICINN, Spain: the first two authors are supported by
MTM2009-14694-C02-02 co-financed by FEDER (European Regional
Development Fund) and the remaining two authors by
MTM2009-14694-C02-01. All authors are also partially supported by the
Thematic Network (Red Tem\'atica) MTM2008-02829-E, Acciones
Especiales, MICINN. The first two and the fourth author are also partially sponsored by the ESF Network HCAA}
\begin{abstract}
We characterize (in almost all cases) the holomorphic self-maps of
the unit disk that intertwine two given linear fractional
self-maps of the disk. The proofs are based on iteration and a careful  analysis of the Denjoy-Wolff points. In particular, we characterize the maps that commute with a given linear fractional map (in the cases that are not already known) and, as an application, determine all ``roots'' of such maps in the sense of iteration (if any). This yields as a byproduct a short proof of a recent theorem on the embedding of a linear fractional transformation into a semigroup of holomorphic self-maps of the disk.
\end{abstract}
\maketitle
 \tableofcontents
\section*{Introduction}
\par
In what follows, writing $f\in\ch(\D,\D)$ or saying that $f$ is a
\textit{self-map\/} of the unit disk $\D$ will mean that $f$ is
analytic in $\D$ and $f(\D)\subset\D$. We will be particularly
interested in \textit{linear fractional transformations\/}:
$\f(z)=(a z+b)/(c z+d)$, $a d-b c\neq 0$, from now on, often
abbreviated as LFT's. The main purpose of this paper is to study
the self-maps $f$ of $\D$ that intertwine two prescribed linear
fractional self-maps $\f$ and $\p$ of the disk: $f\circ\f=\p\circ
f$.
\par
Our study is inspired by the special case when $\f=\p$; that is,
$f\circ\f=\f\circ f$. Related questions have a long history which
started with the pioneering works by Shields \cite{Shi} and Behan
\cite{Beh} on families of commuting self-maps of the disk and
continued with Cowen's articles \cite{C1, C2}. A considerable
amount of work has been done by Italian authors: see \cite{BG},
\cite{Br}, \cite{BTV}, or \cite{Vl}, to mention only a few papers
in the context of one complex variable. A relationship with the so-called polymorphic functions \cite{P1} should also be mentioned. Several partial results regarding the commutation with LFT's can be found in the recent paper \cite{JRS}.
\par
Two classical examples of intertwining relations are Schroeder's equation and Abel's equation from complex dynamics, where one of the intertwined maps is an LFT \cite{Va}, \cite{PC}, \cite{CDP1}. They are also important in the theory of composition operators \cite{CM, Sha, SSS} where they are used to analyze, for example, the compactness and the spectrum.
\par
Our aim is to show that in most situations where $\f$ and $\p$ are
both LFT's, the intertwining equation $f\circ\f=\p\circ f$ forces
$f$ to be an LFT as well. Such \textit{rigidity principles\/} are
frequent in the studies involving the use of different
generalizations of the Schwarz Lemma or its boundary versions
\cite{A}, \cite{BK}, \cite{BTV}. One of the key points in
answering the question on intertwining is precisely a careful
analysis of the Denjoy-Wolff points of the maps $\f$ and $\p$ and
the behavior of $f$ at these points.
\par
It should be noted, however, that there are quite a few exceptions to the rigidity principle in certain special situations. In order to make this clear, appropriate examples are provided wherever needed. We establish  that the intertwining equation can only hold for certain combinations of types of $\f$ and $\p$. Also, a conformality property of $f$ at the Denjoy-Wolff point of $\f$ is often relevant in this context. Even proving these initial properties seems to require a
considerable amount of work.
\par
We now indicate how the paper is organized and comment on some of our main results. In order to make the article as self-contained as possible, the essential background will be reviewed where needed but most preliminary facts are listed in Section~\ref{sec-backgr}.
\par
Following the standard classification of the non-identity self-maps of $\D$ into three cases: elliptic, parabolic, and hyperbolic, depending
on the location and properties of the Denjoy-Wolff points, in
Section~\ref{sec-compat-fcn-eqns} we analyze the compatibility of
dynamic types of $\f$ and $\p$ as a prerequisite for satisfying
the intertwining equation $f\circ\f=\p\circ f$. That is, given two linear fractional self-maps $\f$ and $\p$ of the disk of prescribed types, where $\f\neq id_\D$ and $\tau$ is the Denjoy-Wolff point of $\f$, the question is: does there exist a non-constant analytic function $f:\D\to\D$ conformal at $\tau$ and satisfying $ f\circ\f=\p\circ f$? The main results of Section~\ref{sec-compat-fcn-eqns} are summarized in the table below.
\par\smallskip
\begin{center}
Table 1
\par\smallskip
\noindent
\begin{tabular}[h]{||l l ||c|| c | c || c | c || c|c||}\hhline{|t:=========:t|}
\multicolumn{2}{||c||}{\backslashbox{$\varphi$}{$\psi$} } & $\textrm{id}_\D$ &
\multicolumn{2}{c||}{\begin{tabular}{l | l }\multicolumn{2}{c}{Elliptic} \\\hline Aut. & N-A
\end{tabular}} & \multicolumn{2}{c||}{\begin{tabular}{c | c }\multicolumn{2}{c}{Hyperbolic}
\\\hline Aut. & N-A\end{tabular}} & \multicolumn{2}{c||}{\begin{tabular}{c | c
}\multicolumn{2}{c}{Parabolic}
\\\hline Aut. & N-A\end{tabular}}\\
\hhline{|:=========:|}
Ell. & \hspace*{-0.29cm} \vline \ Aut. & $ \clubsuit$ & \hspace*{0.3cm}$ \clubsuit$  \hspace*{0.25cm} & No & No &  No  & No & No \\
\cline {2-9} & \hspace*{-0.29cm} \vline \ N-A & No &   No & $ \clubsuit$  & No & No & No & No \\
\hhline{|:=========:|}
Hyp.  & \hspace*{-0.29cm} \vline \ Aut. & No& No & $ \clubsuit$  & \hspace*{0.3cm}$ \clubsuit$  \hspace*{0.25cm}  & No & No & No\\
\cline {2-9} & \hspace*{-0.29cm} \vline \ N-A & No  & No & $ \clubsuit$  & $ \clubsuit$ & $ \clubsuit$  & No & No \\
\hhline{|:=========:|}
Par. & \hspace*{-0.29cm} \vline \ Aut. & $ \clubsuit$  & No & No & No & No & \hspace*{0.3cm}$ \clubsuit$  \hspace*{0.25cm}& No \\
\cline {2-9} & \hspace*{-0.29cm} \vline \ N-A & No  & No & No & No & No & No& $ \clubsuit$  \\
\hhline{|b:=========:b|}
\end{tabular}
\end{center}
\par\smallskip
The answer No in each case means that no such $f$ can exist regardless of the choice of $\f$ and $\p$ of the types required. The symbol $\clubsuit$ indicates that for \textit{some\/} LFT's $\f$ and $\p$ of the type specified one can find a function $f$ satisfying the conditions specified above. The abbreviations Aut and N-A mean automorphic and non-automorphic respectively.
\par
Once we know the cases when such an $f$ exists, it is natural to ask whether it is necessarily a linear fractional map. In Section~\ref{sec-fcn-eqns} we consider in detail all possible cases of intertwining and obtain several rigidity principles of this type. A separate and lengthy analysis in each different case is required in some theorems. The results are given at a glance in the following table.
\par\smallskip
\begin{center}
Table 2
\par\smallskip
\noindent
\begin{tabular}[h]{||l l ||c|| c | c || c | c || c|c||}\hhline{|t:=========:t|}
\multicolumn{2}{||c||}{\backslashbox{$\varphi$}{$\psi$} } & $\textrm{id}_\D$ &
\multicolumn{2}{c||}{\begin{tabular}{l | l }\multicolumn{2}{c}{Elliptic} \\\hline Aut. & N-A\end{tabular}} & \multicolumn{2}{c||}{\begin{tabular}{l | l }\multicolumn{2}{c}{Hyperbolic}
\\\hline Aut. & N-A\end{tabular}} & \multicolumn{2}{c||}{\begin{tabular}{l | l
}\multicolumn{2}{c}{Parabolic}
\\\hline Aut. & N-A\end{tabular}}\\
\hhline{|:=========:|}
Ell. & \hspace*{-0.3cm} \vline \ Aut. & No & \hspace*{0.13cm} No \hspace*{0.09cm}  &  &  &  &  &  \\
\cline {2-9} &\hspace*{-0.3cm} \vline \ N-A &  &    & Yes & & & & \\
\hhline{|:=========:|}
Hyp.  &\hspace*{-0.3cm} \vline \ Aut. &  &  & Yes & \hspace*{0.1cm} Yes \hspace*{0.08cm}&  &  &  \\
\cline {2-9} &\hspace*{-0.3cm} \vline \ N-A &    &  & Yes & Yes & Yes & &  \\
\hhline{|:=========:|}
Par. & \hspace*{-0.3cm} \vline \ Aut. & No &  &  &  &  & \hspace*{0.13cm} No \hspace*{0.091cm} & \\
\cline {2-9} &\hspace*{-0.3cm}  \vline \ N-A &   &  &  & & & & Yes \\
\hhline{|b:=========:b|}
\end{tabular}
\end{center}
\par\smallskip
(Naturally, blank spaces correspond to the cases ruled out by Table~1.) \par 
In Section~\ref{sec-applns} we give several applications of the
results obtained and the methods employed in
Section~\ref{sec-compat-fcn-eqns} and Section~\ref{sec-fcn-eqns}.
In particular, we solve completely the following problem: when does a self-map $f$ of the disk commute with a linear fractional self-map $\f$ of the disk? It should be noted that even in this special case
a direct application of earlier, seemingly more general, results
is not sufficient and additional discussions are required in order to deduce the desired conclusions.
\par
The paper ends with a new approach on what is sometimes called the
Koenigs embedding problem for semigroups of self-maps. The
machinery developed in the study of the roots allows us to deduce
in a quick way some results on embedding an LFT into such a
semigroup. This has been partly known for some time but in the full generality the statement has only been completed recently and by different methods \cite{BCD}.
\par\smallskip
\textbf{Acknowledgments}. We are most grateful to Professor Christian Pommerenke for some useful comments and for calling our attention to his paper \cite{P1}. The initial proof of Theorem~\ref{thm-inter-rigid} in this paper was based on the theory of models for discrete iteration in the unit disk, the key tool being the uniqueness of solutions of the well-known Schroeder and Abel equations for certain families of holomorphic self-maps of the unit disk. However, influenced by Pommerenke's proof of \cite[Theorem~3]{P1}, based only on the uniqueness for the Cauchy problem for ordinary differential equations, we have been able to avoid completely the approach based on models, thus simplifying the presentation.

\section{Some background}
 \label{sec-backgr}

\subsection{Two simple criteria for an LFT to map the disk into itself}
\label{subs-lft-self}
It is clear that only some LFT's are self-maps of the disk.
However, it seems quite difficult to find a criterion explicitly
stated in the literature for deciding when an LFT given by
\begin{equation}
 \f(z)=\frac{az+b}{cz+d} \,, \qquad ad-bc\neq 0 \,,
 \label{lft}
\end{equation}
is a self-map of $\D$ in terms of $a$, $b$, $c$, and $d$. It is
our belief that such criteria deserve to be mentioned explicitly
since they are both simple and quite useful. We begin by stating
two simple tests of this kind. The first one is from the third author's paper \cite{M}.
\par
\begin{proposition} \label{prop-lft-self}
For a map $\f$ given by \eqref{lft} we have $\f(\D)\subset\D$ if
and only if
\begin{equation}
|b\overline d - a\overline{c}| + |ad-bc| \leq |d|^2-|c|^2 \,.
 \label{self-map}
\end{equation}
\end{proposition}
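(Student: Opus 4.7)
The plan is to deduce \eqref{self-map} from an explicit description of the image circle $\f(\mathbb{T})$ combined with the maximum modulus principle, handling the degenerate cases as a postscript.

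First, I would reduce to the case $|d|>|c|$. If $c\neq 0$, the condition $\f(\mathbb{D})\subset\mathbb{D}$ rules out the pole $-d/c$ lying in $\mathbb{D}$ or on $\mathbb{T}$, since in the boundary case $|\f(z)|\to\infty$ as $z$ approaches the pole from within $\mathbb{D}$. Hence $|d|>|c|$. Conversely, if \eqref{self-map} holds, its right-hand side $|d|^2-|c|^2$ must be nonnegative, and equality would force $|ad-bc|=0$ (since $|b\overline d-a\overline c|\geq 0$), contradicting the assumption $ad-bc\neq 0$; so again $|d|>|c|$.

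The main step is to compute the center and radius of the circle $\f(\mathbb{T})$. Writing
\[
\f(z)=\frac{a}{c}+\frac{bc-ad}{c(cz+d)},
\]
the set $\{cz+d:z\in\mathbb{T}\}$ is the circle of center $d$ and radius $|c|$. Since $|d|>|c|$, the origin lies outside this circle, and the inversion $w\mapsto 1/w$ carries it to the circle with center $\overline d/(|d|^2-|c|^2)$ and radius $|c|/(|d|^2-|c|^2)$ by the classical formula. Multiplying by $(bc-ad)/c$ and translating by $a/c$, a short algebraic simplification based on the identity $a(|d|^2-|c|^2)+(bc-ad)\overline d=c(b\overline d-a\overline c)$ yields
\[
\mathrm{center}\,(\f(\mathbb{T}))=\frac{b\overline d-a\overline c}{|d|^2-|c|^2},\qquad \mathrm{radius}\,(\f(\mathbb{T}))=\frac{|ad-bc|}{|d|^2-|c|^2}.
\]

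Now the maximum modulus principle finishes the job. Since $\f$ is holomorphic in a neighborhood of $\overline{\mathbb{D}}$, we have $\max_{\overline{\mathbb{D}}}|\f|=\max_{\mathbb{T}}|\f|$, and the latter equals the maximum distance from the origin to the circle $\f(\mathbb{T})$, namely $|\mathrm{center}|+\mathrm{radius}$. Therefore $\f(\mathbb{D})\subset\mathbb{D}$ is equivalent to this quantity being $\leq 1$, which, after clearing the positive denominator $|d|^2-|c|^2$, is exactly \eqref{self-map}. The remaining case $c=0$ is direct: $\f(z)=(a/d)z+b/d$ maps $\mathbb{D}$ onto the disk of center $b/d$ and radius $|a/d|$, so $\f(\mathbb{D})\subset\mathbb{D}$ iff $|a|+|b|\leq|d|$, which is also exactly \eqref{self-map} when $c=0$. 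The only delicate point is the algebraic simplification of the center of $\f(\mathbb{T})$, but this reduces to the identity displayed above and poses no real obstacle.
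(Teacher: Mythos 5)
Your proof is correct. Bear in mind, however, that the paper offers no proof of Proposition~\ref{prop-lft-self} at all: the statement is quoted from \cite{M}, and the authors instead prove only the companion criterion, Lemma~\ref{lem-lft-self}. So the comparison is really with that proof. Your route is the geometric one: you identify $\f(\T)$ as the circle with centre $(b\overline d-a\overline c)/(|d|^2-|c|^2)$ and radius $|ad-bc|/(|d|^2-|c|^2)$ (your simplification of the centre via the identity $a(|d|^2-|c|^2)+(bc-ad)\overline d=c(b\overline d-a\overline c)$ checks out, as does the inversion formula, which is legitimate because $|d|>|c|$ keeps the origin off the circle $\{cz+d:z\in\T\}$), and then \eqref{self-map} is exactly $|\mathrm{centre}|+\mathrm{radius}\le 1$ after clearing the positive denominator. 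This has the virtue of explaining where the two terms on the left of \eqref{self-map} come from. The paper's technique for Lemma~\ref{lem-lft-self} is more algebraic: it also invokes the maximum principle to reduce to the boundary, but then expands $|az+b|^2\le|cz+d|^2$ on $|z|=1$ and takes a supremum of $2\,\mathrm{Re}\{(a\overline b-c\overline d)z\}$ over unimodular $z$, which avoids the circle-inversion computation at the cost of landing on the differently packaged condition \eqref{self-cond} (and, as the authors remark, passing algebraically between the two criteria is not straightforward). Your treatment of the side issues is complete: $|d|>|c|$ is correctly forced in both directions, the case $c=0$ reduces to $|a|+|b|\le|d|$, and the passage from $\max_\T|\f|\le1$ to the strict inclusion $\f(\D)\subset\D$ is justified because $\f$ is non-constant.
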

\par
Here is yet another criterion whose proof is simpler than
the one of Proposition~\ref{prop-lft-self} and which is equally
efficacious for our purpose. Note that, although the two criteria are clearly equivalent, it does not seem simple to deduce any of them from the other by elementary algebraic methods.
\par
\begin{lemma} \label{lem-lft-self}
An LFT given by \eqref{lft} maps $\D$ into itself if and only if
$|d|>|c|$ and
\begin{equation}
 2 |a\overline{b}-c\overline{d}|\le |c|^2+|d|^2-|a|^2-|b|^2 \,.
 \label{self-cond}
\end{equation}
Moreover, $\f$ is a disk automorphism if and only if
$|c|^2+|d|^2-|a|^2-|b|^2=a\overline{b}-c\overline{d}=0$.
\end{lemma}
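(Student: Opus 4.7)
The plan is to recast the defining inequality $|\varphi(z)|\le 1$ algebraically: for $z$ with $cz+d\neq 0$, $|\varphi(z)|\le 1$ is equivalent to $|cz+d|^{2}-|az+b|^{2}\ge 0$. Expanding both squares, one gets
\[
 |cz+d|^{2}-|az+b|^{2}
 =(|c|^{2}-|a|^{2})|z|^{2}+2\,\textrm{Re}\bigl((c\overline d-a\overline b)z\bigr)+(|d|^{2}-|b|^{2}).
\]
Call this quantity $Q(z)$ and write $A=|c|^{2}-|a|^{2}$, $B=c\overline d-a\overline b$, $C=|d|^{2}-|b|^{2}$, so that the condition to verify becomes $A+C\ge 2|B|$ (recall that $|B|=|a\overline b-c\overline d|$).

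For the forward direction, I would first argue that $|d|>|c|$ is forced. If $c\neq 0$ and $|d|\le|c|$, the pole $-d/c$ lies in $\overline\D$; the hypothesis $ad-bc\neq 0$ prevents the numerator from vanishing there too, so $|\varphi(z)|\to\infty$ as $z\to-d/c$ from inside $\D$, contradicting $\varphi(\D)\subset\D$. If $c=0$, then $d\neq 0$ (else $ad-bc=0$), so $|d|>|c|=0$ anyway. With (1) established, $\varphi$ is analytic on $\overline\D$; by the maximum principle $|\varphi|\le 1$ on $\partial\D$, which on $|z|=1$ reads $A+C+2\,\textrm{Re}(Bz)\ge 0$. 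Minimizing $\textrm{Re}(Bz)$ over $|z|=1$ gives $-|B|$, yielding $A+C\ge 2|B|$, i.e.\ \eqref{self-cond}.

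For the converse, (1) makes $\varphi$ analytic on $\overline\D$ (the pole, if any, lies outside $\overline\D$); on $\partial\D$ the identity above together with (2) gives $Q(z)=A+C+2\,\textrm{Re}(Bz)\ge A+C-2|B|\ge 0$, hence $|\varphi|\le 1$ on $\partial\D$. The maximum principle applied to $\varphi$ (which is non-constant because $ad-bc\neq 0$) then gives $|\varphi|<1$ throughout $\D$.

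For the automorphism clause I would use the standard fact that an LFT self-map of $\D$ is an automorphism iff it maps $\partial\D$ to $\partial\D$, i.e.\ iff $|\varphi|\equiv 1$ on $\partial\D$, i.e.\ iff $Q(e^{i\theta})\equiv 0$. Writing this as $A+C+2|B|\cos(\theta+\arg B)\equiv 0$ and observing that a non-constant cosine cannot be identically zero, we must have $|B|=0$ and $A+C=0$ simultaneously, giving exactly $a\overline b-c\overline d=0$ and $|c|^{2}+|d|^{2}-|a|^{2}-|b|^{2}=0$. The only real subtlety anywhere in the argument is the forward step excluding $|d|=|c|$; the rest is bookkeeping with the expansion of $Q(z)$ and one application each of the maximum principle and the min/max of a sinusoid on the circle.
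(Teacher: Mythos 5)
Your proof is correct and follows essentially the same route as the paper's: reduce to the boundary via the Maximum Modulus Principle, expand $|cz+d|^2-|az+b|^2$ on $|z|=1$, and optimize the resulting sinusoid, with the automorphism clause read off from when that quantity vanishes identically. You merely spell out two steps the paper leaves terse (why $|d|>|c|$ is forced, and the cosine argument for the automorphism case), which is fine.
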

\begin{proof}
In order that an LFT $\f$ map $\D$ into itself, it must also map
the closed disk $\overline{\D}$ into itself. The condition
$|d|>|c|$ is clearly necessary in order for $\f$ to be analytic in
$\overline{\D}$. By the Maximum Modulus Principle, the condition
of being a self-map of $\overline{\D}$ is equivalent to
$|az+b|^2\le |cz+d|^2$ for all $z$ of modulus one, and this
happens if and only if
\[
 2\mathrm{Re\,}\{(a\overline{b}-c\overline{d}) z\} \le |c|^2+|d|^2-
 |a|^2-|b|^2 \,, \quad z\in\D \,.
\]
Taking the supremum over all numbers $z$ of modulus one, it is
immediate that the last condition is equivalent to
\eqref{self-cond}. The automorphism part is clear.
\end{proof}
\par
In the special case when $\f(0)=0$, that is, when $b=0$, both criteria  become simpler. In this form, they have been stated earlier in \cite{BFHS} and \cite[p.~203]{Sho}, for example.
\subsection{Angular limits and derivatives}
 \label{subs-basic-ang}
We will use $\angle$ before a limit to denote the angular
(non-tangential) limit. For a given self-map $f$ of $\D$ and a
point $p\in\partial\D$, if the angular limit $q=\angle\lim_{z\to
p} f(z)$ also belongs to $\partial\D$, then the angular limit
$\angle \lim_{z\to p} \dfrac{f(z)-q}{z-p}$ exists (on the Riemann
sphere $\widehat{\C}=\C\cup\{\infty\}$) and is different from zero
\cite[Proposition 4.13]{P2}. This limit is known as the
\textit{angular derivative\/} of $f$ at $p$. As is usual, we will denote it by $f^{\prime}(p)$.
\par
Closely related to these notions is the concept of (angular)
conformality at a point $p\in\partial\D$, which will play a major
role in our theorems and proofs. We recall that $f\in \ch(\D, \D)$
is said to be \textit{conformal\/} at $p\in\partial\D$ if the
angular limits
\[
q=\angle\lim_{z\to p}f(z)\in\overline{\D}\quad \text{and} \quad
\angle\lim_{z\to p}\frac{f(z)-q}{z-p}\neq 0,\infty
\]
exist (see \cite{P2}). Whenever $f$ is analytic at $p$, the
meaning of angular conformality coincides with the usual meaning:
$f^{\prime}(p)\neq 0$.

\subsection{Iteration and Denjoy-Wolff points}
 \label{subs-basic-iter}
Denote by $\N$ the set of all positive integers. As is usual, we
will write $f_n$ for the $n$-th iterate of a self-map $f$ of $\D$,
defined inductively by $f_1=f$ and $f_{n+1}=f\circ f_n$, $n\in\N$.
\par
It can easily be deduced from the Schwarz-Pick Lemma that a
non-identity self-map $f$ of the disk can have at most one fixed
point in $\D$. If such a unique fixed point in $\D$ exists, it is
usually called the \textit{Denjoy-Wolff point\/}. The iterates
$f_n$ of $f$ converge to it uniformly on the compact subsets of
$\D$ whenever $f$ is not a disk automorphism, but even for an
automorphism with a unique fixed point in $\D$ we will still refer
to such a point as the Denjoy-Wolff point of $f$.
\par
If $f$ has no fixed point in $\D$, the Denjoy-Wolff theorem
\cite{CM}, \cite{Sha} guarantees the existence of a unique point
$p$ on the unit circle $\partial\D$ which is the
\textit{attractive fixed point\/}, that is, the iterates $f_n$
converge to $p$ uniformly on the compact subsets of $\D$. Such $p$
is again called the \textit{Denjoy-Wolff point\/} of $f$. Note
that $f$ can have other (boundary) fixed points.
\par
Whenever $\f$ is a linear fractional self-map of the disk, its
Denjoy-Wolff point is a true fixed point since the map is
holomorphic in a disk larger than $\D$ and centered at the origin.
\par
When $p\in\partial\D$ is the Denjoy-Wolff point of $f$, then
$f^\prime(p)$ is actually real-valued and, moreover, $0<
f^\prime(p)\le 1$; see \cite{P2}. As is often done in the
literature, we classify the non-identity holomorphic self-maps of the disk into three categories according to their behavior near the Denjoy-Wolff point:
\begin{itemize}
 \item[(a)] \textit{elliptic\/} maps: the ones with a fixed point
inside the disk;
 \item[(b)] \textit{hyperbolic\/}: the ones with the Denjoy-Wolff
point $p\in\partial\D$ such that $f^\prime(p)<1$;
 \item[(c)] \textit{parabolic\/}: the ones with the Denjoy-Wolff
point $p\in\partial\D$ such that $f^\prime(p)=1$.
\end{itemize}
\par
\subsection{Standard simplifications by conjugation}
 \label{subs-simplif}
For each of the dynamic types, our study can be normalized by conjugation with an appropriate map so as to consider instead a simplified situation, either in the disk or in the half-plane.
\par
The standard disk automorphism $\f_p$ defined by
\begin{equation}
 \f_p(z)=\frac{p-z}{1-\overline{p} z} \,, \quad p\in\D
 \label{inv-autom}
\end{equation}
interchanges the points $p$ and $0$ and is its own inverse. It is easy to see that if $\f$ is an arbitrary elliptic automorphism of the disk different from the identity map and with Denjoy-Wolff point $p$, then
\[
 (\f_p\circ\f\circ\f_p)(z)=\la z \quad \text{and} \quad \la=
 \f^\prime(p)\in\partial\D\setminus\{1\}\,.
\]
This conjugation obviously reduces the study of elliptic
automorphisms to that of rotations.
\par
The following simple and standard procedure is suitable for both
the hyperbolic and parabolic maps. Let $\tau$ be the Denjoy-Wolff
point of a self-map $\f$ of $\D$ with $|\tau|=1$. The Cayley
transform $T_\tau(z)=\frac{\tau+z}{\tau-z}$ maps $\D$ conformally
onto the right half-plane $\H=\{z\,\colon\,\text{Re\,}z>0\}$ and
takes the point $\tau$ to infinity. Thus, to every self-map $f$ of
$\D$ there corresponds a unique self-map $g$ of $\H$, called the
\textit{conjugate map\/} of $f$, such that $g=T_\tau \circ f\circ
T_\tau^{-1}$ with the point at infinity as the Denjoy-Wolff point
(in $\H$).
\par
The method just described is particularly useful in the case of
linear fractional maps. It is not difficult to check that every
hyperbolic or parabolic linear fractional self-map $\f$ of $\D$
into itself is conjugate to a map of the form $\widetilde{\f}(w)=
A w+B$ with $A\ge 1$ and Re$B\ge 0$, with Denjoy-Wolff point at
infinity, and with $A=1/\f^\prime(\tau)$. Hence, $\f$ is parabolic
if and only if $A=1$ and hyperbolic if and only if $A>1$.

\section{Conformality of solutions and compatibility of dynamic
types}
 \label{sec-compat-fcn-eqns}
In this (and the next) section we study the \textit{intertwining
equation\/}:
\begin{equation}
 f\circ\f=\p\circ f \,.
 \label{inter-eq}
\end{equation}
We first state and prove some basic necessary conditions for the
existence of solutions to \eqref{inter-eq}. They either tell us
that the solution must be conformal at the Denjoy-Wolff point of
$\f$ or indicate what dynamic types are required of $\f$ and $\p$
in order that the solution exist.
\subsection{Conformality of solutions}
 \label{subs-conform}
In what follows, $id_\D$ will denote the identity map acting on
the disk. Since a self-map of the unit disk other than $id_\D$ has
at most one fixed point in $\D$, if \eqref{inter-eq} holds and the
fixed point $p$ of $\f$ belongs to $\D$, then $f(p)$ is a fixed
point of $\p$ in $\D$.
\par
We begin by proving an auxiliary statement which may be of independent interest. It extends, and is modeled after, a similar but more special observation from \cite{Beh} on the fixed points of two intertwining linear fractional maps to the boundary. It also gives some information about the (angular) conformality of $f$ which seems to be a novelty in this context.
\par
\begin{proposition} \label{prop-bdry}
Suppose that \eqref{inter-eq} holds for the self-maps $f$, $\f$,
and $\p$ of $\,\D$, where $\,\f$ and $\,\p$ are LFT's. If the
Denjoy-Wolff point $p$ of $\f$ belongs to $\partial \D$ and $\p$
is not an elliptic automorphism, then $\angle \lim_{z\to
p}f(z)=q\in \overline{\D}$, where $q$ is the Denjoy-Wolff point of
$\,\p$. Moreover, if $\,q$ also belongs to $\partial \D$, then
there exists $\mu \in \widehat{\C}\setminus \{0\}$ such that
\[
\mu =\angle \lim_{z\to p}\frac{f(z)-q}{z-p}.
\]
In particular, $f$ is conformal at $p$ if and only if $\mu \neq
\infty $.
\end{proposition}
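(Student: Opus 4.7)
The plan is to prove the two assertions separately. The existence of the angular limit carries the main weight; the existence of $\mu\in\widehat{\C}\setminus\{0\}$ will then be immediate from the Julia--Carath\'eodory fact recalled in Subsection~\ref{subs-basic-ang}. First I would iterate the intertwining equation to obtain $f\circ\f_n=\p_n\circ f$ for every $n\in\N$. Since $\p$ is not an elliptic automorphism, the Denjoy--Wolff theorem applied to $\p$ gives $\p_n(w)\to q$ for each $w\in\D$, so for any fixed $z_0\in\D$,
\[
f(\f_n(z_0))=\p_n(f(z_0))\to q,
\]
while $\f_n(z_0)\to p$. Thus $q$ is already an asymptotic value of $f$ at $p$ along the orbit, and the task is to promote this to an angular limit.

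If $q\in\D$, i.e.\ $\p$ is elliptic non-automorphic, then $\p_n\to q$ uniformly on $\overline{\D}$, and so $f\circ\f_n=\p_n\circ f\to q$ uniformly on $\D$. When $\f$ is an automorphism this forces $f\equiv q$ via the identity $f=\p_n\circ f\circ\f_n^{-1}$; when $\f$ is not, $\f_n(\D)$ is a horodisk at $p$ with Euclidean diameter tending to $0$, and since every Stolz angle at $p$ is eventually contained in $\f_n(\D)$ the conclusion $\angle\lim_{z\to p}f(z)=q$ follows. If instead $q\in\partial\D$ the uniform convergence of $\p_n$ is lost and I would argue via Lindel\"of. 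Fix $z_0\in\D$ and concatenate the hyperbolic geodesic segments $G_n$ joining $\f_n(z_0)$ to $\f_{n+1}(z_0)$ into a continuous curve $\Gamma$ in $\D$ with endpoint $p$. By the Schwarz--Pick lemma the hyperbolic diameter of $G_n$ is bounded by the constant $C$ equal to the hyperbolic distance between $z_0$ and $\f(z_0)$; applying Schwarz--Pick once more to $f$, the image $f(G_n)$ lies inside the hyperbolic ball of radius $C$ centered at $f(\f_n(z_0))$. Because $f(\f_n(z_0))\to q\in\partial\D$, that hyperbolic ball has Euclidean diameter tending to $0$, so $\sup_{z\in G_n}|f(z)-q|\to 0$, and Lindel\"of's theorem yields $\angle\lim_{z\to p}f(z)=q$.

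With the angular limit in hand and $q\in\partial\D$, the existence of $\mu\in\widehat{\C}\setminus\{0\}$ with $\mu=\angle\lim_{z\to p}(f(z)-q)/(z-p)$ is precisely the classical Julia--Carath\'eodory fact recalled in Subsection~\ref{subs-basic-ang}, and the equivalence of $\mu\neq\infty$ with conformality of $f$ at $p$ is just the definition given there. The principal obstacle is the case $q\in\partial\D$ of the first assertion: the simple uniform-convergence argument available when $q\in\D$ breaks down, and one must manufacture an interpolating curve along the orbit and control the oscillation of $f$ on each geodesic segment $G_n$ by combining Schwarz--Pick with the fact that hyperbolic balls of fixed radius centered at points approaching $\partial\D$ shrink to a point in Euclidean size.
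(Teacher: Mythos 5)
Your argument for the case $q\in\partial\D$ is sound and is essentially the paper's argument in different clothing: the paper also threads a curve through the orbit $\{\f_n(z_0)\}$ (using the arcs $\f_n\bigl([0,\f(0)]\bigr)$ rather than hyperbolic geodesic segments), shows $f$ tends to $q$ along it, and invokes Lindel\"of; the second assertion is, as you say, just the Julia--Carath\'eodory fact recalled in Subsection~\ref{subs-basic-ang}. The genuine gap is in your case $q\in\D$. For an elliptic non-automorphic LFT $\p$ the convergence $\p_n\to q$ is uniform only on \emph{compact} subsets of $\D$, not on $\overline{\D}$: the map $\p(z)=z/(2-z)$ has Denjoy--Wolff point $0$ but fixes $1\in\partial\D$, so every $\p_n(\D)$ has $1$ on its boundary and these sets do not shrink to $\{0\}$. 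Both of your sub-arguments then collapse. When $\f$ is an automorphism, the identity $f=\p_n\circ f\circ\f_n^{-1}$ does not force $f\equiv q$, because $\f_n^{-1}(z)$ escapes to the repulsive fixed point of $\f$ where nothing is controlled; the paper's Example~\ref{ex-hyp-aut-ell} (with $\f$ a hyperbolic automorphism, $\p(z)=z/(2-z)$, and the non-constant $f(z)=(1-z)/(z+3)$) is a direct counterexample to your claim. When $\f$ is not an automorphism, knowing that a Stolz angle near $p$ sits inside $\f_n(\D)$ only yields $f(z)\in\p_n(f(\D))$, and this set need not shrink to $\{q\}$ when $\overline{f(\D)}$ meets $\partial\D$ at a boundary fixed point of $\p$ --- which is exactly what happens in the example just cited.

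The repair is to run your Lindel\"of argument for $q\in\D$ as well, but along the curves $\f_n(G_0)$, where $G_0$ is a fixed arc from $z_0$ to $\f(z_0)$, instead of the geodesics $G_n$: the intertwining relation gives
\[
\sup_{z\in \f_n(G_0)}|f(z)-q|=\sup_{w\in G_0}|\p_n(f(w))-q|\longrightarrow 0,
\]
since $f(G_0)$ \emph{is} a compact subset of $\D$ on which $\p_n\to q$ uniformly. This single estimate handles $q\in\D$ and $q\in\partial\D$ simultaneously, which is why the paper needs no case split; your geodesic construction, by contrast, controls the oscillation of $f$ on $G_n$ only through the shrinking of hyperbolic balls near $\partial\D$ and therefore genuinely requires $q\in\partial\D$.
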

\begin{proof}
For each non-negative integer $n$, write $I_{n}=[1-2^{-n},
1-2^{-(n+1)})$. Define $\gamma\colon [0,1)\to\D$ by
\[
 \gamma(t)=\f_{n} \Big( ( 2^{n+1}t-(2^{n+1}-2)) \f(0) \Big)
 \,, \quad t\in I_{n} \,.
\]
Clearly, $\g$ is continuous in $(1-2^{-n},1-2^{-(n+1)})$ for all
$n$, so it is only left to check its continuity at each point of
the form $1-2^{-n}$. It is obvious that
\[
 \lim_{t\searrow 1-2^{-n}}\gamma(t)=\f_n(0)
\]
and also
\begin{eqnarray*}
 \lim_{t\nearrow 1-2^{-n}} \gamma(t) &=& \lim_{t\nearrow 1-2^{-n}}
 \f_{n-1}\Big( ( 2^{n}t-(2^{n}-2) ) \f(0)\Big)
\\
  &=&\f_{n-1}\left( \f(0)\right)=\f_n(0) \,,
\end{eqnarray*}
hence $\gamma$ is continuous in $[0,1)$ and so is a curve in the
unit disk.
\par
The segment $S=[0,\f(0)]$ is a compact subset of $\D$. On the one
hand, by the Denjoy-Wolff Theorem, the sequence
$\{\f_{n}\}_{n=1}^\infty$ converges to $p$ uniformly on $S$ and
therefore $\lim_{t\to1}\gamma(t)=p$. On the other hand, it follows
again from the Denjoy-Wolff Theorem that the sequence
$\{\p_{n}\}_{n=1}^\infty$ converges to $q$ uniformly on $f(S)$.
\par
By an inductive argument, the intertwining equation
\eqref{inter-eq} easily implies that $f\circ\f_n=\p_n\circ f$.
Thus, given $t\in I_{n}$ we have $f(\gamma(t))=f(\f_{n}(w))=
\p_{n}(f(w))$ for some point $w=w(t)\in [0,\f(0)]$. Therefore, we
conclude that $\lim_{t\to1}f(\gamma(t))=q$. Finally, by
Lindel\"{o}f's classical theorem \cite[\S 12.10]{R} it follows
that $\angle\lim_{z\to p}f(z)=q$.
\par
Assume now that $q$ also belongs to $
\partial \D$ and consider $g(z)=p\overline{q}f(z)$, $z\in\D$.
Clearly, $g$ is a holomorphic self-map of $\D$ which has $\,p$ as
a fixed point, and the corresponding angular limit exists.
Therefore, there exists $\delta\in (0,+\infty )\cup \{+\infty\}$
such that
\[
 \delta =\angle \lim_{z\to p}\frac{g(z)-p}{z-p}.
\]
The existence of the number $\mu$ defined in the statement follows
immediately by taking $\mu=\overline{p}\,q\,\d$.
\end{proof}
\par
There exists a function $f\in\ch(\D,\D)$ which is not conformal at
$p$ but still satisfies the intertwining equation $f\circ\f=
\p\circ f$, with either both $\f$ and $\p$ elliptic or both
hyperbolic, as will be seen from our later
Example~\ref{ex-non-univ} and Example~\ref{ex-noncom-infder}.
However, this is impossible when $\f$ and $\p$ are parabolic, as
the following result shows.
\par\smallskip
\begin{theorem}
\label{thm-comp} Suppose that \eqref{inter-eq} holds for the
self-maps $f$, $\f$, and $\p$ of $ \D$, where \ $\f$ and $\p$ are
parabolic LFT's. Likewise, let $p$ and $q$ be the Denjoy-Wolff
points of $\f$ and $\p$, respectively. Then, $f$ is conformal at
$p$ and actually
\[
 p\,\overline{q}\,f^{\prime}(p)\in (0,+\infty ) \quad and
 \quad f^{\prime}(p)=\frac{\f^{\prime\prime}(p)}{\p^{\prime\prime}
 (q)}\,.
\]
\end{theorem}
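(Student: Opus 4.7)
My plan is to reduce the problem to a translation equation in the right half-plane $\H$. Let $T_p(z) = (p+z)/(p-z)$ and $T_q(z) = (q+z)/(q-z)$ be the Cayley transforms sending $p$, $q$ to $\infty$. The conjugates $\widetilde\f = T_p \circ \f \circ T_p^{-1}$ and $\widetilde\p = T_q \circ \p \circ T_q^{-1}$ are parabolic self-maps of $\H$ with Denjoy-Wolff point $\infty$, so they have the form $w\mapsto w+\beta_1$ and $w\mapsto w+\beta_2$ respectively, with $\beta_j\neq 0$ and $\mathrm{Re}\,\beta_j\geq 0$. A routine Taylor expansion at $p$ and $q$ identifies $\beta_1 = p\,\f''(p)$ and $\beta_2 = q\,\p''(q)$. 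The conjugated map $g = T_q\circ f\circ T_p^{-1}\colon\H\to\H$ then satisfies the translation identity $g(w+\beta_1) = g(w)+\beta_2$.

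A direct computation via the Cayley transforms shows that the conformality of $f$ at $p$ is equivalent to the existence of a finite nonzero angular limit $\ell := \angle\lim_{w\to\infty} g(w)/w$, and that whenever $\ell$ exists one has $f'(p) = \bar p\, q/\ell$. Since $\p$ is parabolic, and hence not an elliptic automorphism, Proposition~\ref{prop-bdry} guarantees that $\ell$ exists as an element of $\widehat{\C}\setminus\{\infty\}$. The remaining task is to evaluate $\ell$ and to check that $\ell\in(0,\infty)$.

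To compute $\ell$ I split into two cases. If $\mathrm{Re}\,\beta_1>0$ (i.e., $\f$ is non-automorphic), then the sequence $w_n = w_0 + n\beta_1$ tends to $\infty$ along a ray of argument $\arg\beta_1\in(-\pi/2,\pi/2)$, which is non-tangential in $\H$. Iterating the translation identity yields $g(w_n)/w_n \to \beta_2/\beta_1$, so $\ell = \beta_2/\beta_1$. If instead $\beta_1 = i\alpha_1$ is purely imaginary ($\f$ automorphic), then $\widetilde\f$ is invertible, so the identity extends to all $n\in\mathbb{Z}$; the requirement that $g(w+in\alpha_1) = g(w) + n\beta_2$ remain in $\H$ for every $n\in\mathbb{Z}$ forces $\mathrm{Re}\,\beta_2=0$, say $\beta_2 = i\alpha_2$. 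The natural iteration $w_n = n\beta_1$ is now tangential, so I use instead the diagonal non-tangential sequence $w_n = n(1+i\alpha_1)$, along which iteration gives $g(w_n) = g(n) + n\,i\alpha_2$. Using that the positive real axis is also non-tangential (hence $g(n)/n\to\ell$) and equating the limits along both sequences yields $\ell(1+i\alpha_1) = \ell + i\alpha_2$, so $\ell = \alpha_2/\alpha_1 = \beta_2/\beta_1$.

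Finally, to see that $\ell\in(0,\infty)$, I note that for every $\theta\in(-\pi/2,\pi/2)$ the approach $w = te^{i\theta}\to\infty$ is non-tangential, giving $g(te^{i\theta}) = \ell\, t\, e^{i\theta} + o(t)$; combining this with $\mathrm{Re}\,g(te^{i\theta})>0$ forces $\mathrm{Re}(\ell e^{i\theta})\geq 0$ for every such $\theta$, which pins $\ell$ down to be a non-negative real number. Since $\beta_2\neq 0$ gives $\ell\neq 0$, we conclude $\ell\in(0,\infty)$. Therefore $f$ is conformal at $p$ with $f'(p) = \bar p\, q/\ell = \f''(p)/\p''(q)$ and $p\,\bar q\, f'(p) = 1/\ell\in(0,\infty)$. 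The main technical obstacle is the parabolic automorphism case, where the natural iteration is tangential and one must resort to a diagonal non-tangential sequence to extract $\ell$.
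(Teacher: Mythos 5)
Your proposal is correct and follows essentially the same route as the paper: Cayley conjugation to the half-plane, the translation identity $g(w+\beta_1)=g(w)+\beta_2$, and evaluation of the angular limit $\ell=\angle\lim_{w\to\infty}g(w)/w$ along non-tangential sequences to get $\ell=\beta_2/\beta_1>0$. The only (immaterial) differences are that the paper treats both the automorphic and non-automorphic cases at once with the single pair of sequences $\{n\}$ and $\{n(1+\beta_1)\}$ --- your ``diagonal'' trick --- and simply cites Wolff's theorem for $\ell\in[0,\infty)$ where you re-derive the non-negativity by hand.
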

\begin{proof}
By Proposition~\ref{prop-bdry}, in order to prove that $f$ is conformal
at $p$, we need only check that $\mu\neq\infty$. We apply the
procedure described in Subsection~\ref{subs-simplif}. Consider the function $g:\H\to \H$ defined as $g=T_{q}\circ f\circ T_{p}^{-1}$, where $T_{p}$ and $T_{q}$ are the associated Cayley maps with respect to $p$ and $q$. Since $g$ is a holomorphic self-map of $\H$, we know that there exists $c\ge 0$ such that $c=\angle \lim_{w\to \infty } \dfrac{g(w)}{w}$. Moreover, by the very definition of $\mu $, we have that $c\,p\,\overline{q}=1/\mu$. So, it is only left to see that $c>0$.
\par
By transferring the intertwining equation from $\D$ to $\H$, we
find that $g\circ \widehat{\f }=\widehat{\p }\circ g$, where
\[
 \widehat{\f }(w)=\(T _{p}\circ \f \circ T_{p}^{-1}\)(w)=w+a
\]
and
\[
 \widehat{\p }(w)=\(T _{q}\circ \p \circ T _{q}^{-1}\)(w)=w+b\,,
\]
for some non-zero complex numbers $a$ and $b$ such that
$\text{Re}\, a,\text{Re}\, b\ge 0$  and for all $w\in \H$.
Iterating, we also obtain that, for every $w\in \H$ and for all
$n\in \mathbb{N}$ we have
\[
 g(w+na)=g(w) + n b\,.
\]
From here we get that
\[
 \frac{g(n+n a)}{n+n a} (1+a)= \frac{g(n)}{n}+b
\]
and since the sequences $\{n\}_{n=1}^\infty$ and
$\{n+na\}_{n=1}^\infty$ tend non-tangentially to infinity, by
letting $n\to\infty$ we deduce that $c (1+a)=c+b$, hence $ca=b$.
Since $b\neq 0$, we conclude that $c>0$ as desired.
\par
A tedious but straightforward computation shows that $a=p
\f^{\prime\prime}(p)$ and $b=q \p^{\prime\prime}(q)$. Thus,
recalling that $c\,a=b$, we finally have that
\[
 f^{\prime}(p)=\mu=\frac{1}{c}\frac{q}{p}=\frac{p\f^{\prime\prime}
 (p)}{q\p^{\prime\prime}(q)}\frac{q}{p}=\frac{\f^{\prime\prime}
 (p)}{\p^{\prime\prime}(q)}\,.
\]
\end{proof}

\subsection{Compatibility of dynamic types of the intertwining LFT's}
 \label{subs-compat-types}
Our next two theorems tell us that assuming that one of the maps
$\f$ or $\p$ is of certain dynamic type forces the other to be of
certain type (often the same) in order that the solution to
\eqref{inter-eq} exist.
\par
We will frequently use the term \textit{rational elliptic
automorphism\/} for an elliptic automorphism $\f$ conjugate to the
map $R_\la(z)=\la\,z$, where $\la^n=1$ for some positive integer
$n$. This is clearly equivalent to $\f$ being idempotent:
$\f_n=id_\D$ for some $n\in\N$. However, the term ``rational
elliptic automorphism'' is very common in dynamics and we will use
it here as well.
\begin{theorem}
\label{thm-comp-1} Let $\f$ and $\p$ be linear fractional
self-maps of $\D$ and let $f\in \ch(\D,\D)$ be a non-constant
function. Assume that the intertwining equation \eqref{inter-eq}
holds. Then we have the following conclusions:
\begin{enumerate}
 \item[(a)]
If $\f=id_\D$, then $\p=id_\D$.
 \item[(b)]
If $\p=id_\D$ and $\f\neq id_\D$, $p$ is the Denjoy-Wolff point of
$\f$ and there exists the angular limit $\angle\lim_{z\to p}
f(z)$, then $\f$ is either a rational elliptic automorphism or a
parabolic automorphism.
\end{enumerate}
\end{theorem}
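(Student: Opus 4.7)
Part (a) will follow immediately from the identity principle: with $\f=id_\D$, equation \eqref{inter-eq} becomes $f=\p\circ f$, so $\p$ agrees with $id_\D$ on the set $f(\D)$, which is open since $f$ is non-constant and analytic; hence $\p-id_\D\equiv 0$. I devote the rest of the plan to part (b). A trivial induction on the relation $f\circ\f=f$ yields $f\circ\f_n=f$ for every $n\in\N$, and my strategy is to show that in every dynamic type of $\f$ other than the two allowed ones this forces $f$ to be constant.

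Suppose first that $p\in\D$. If $\f$ is not an automorphism, the Denjoy-Wolff theorem gives $\f_n\to p$ uniformly on compacta, so $f(z)=f(\f_n(z))\to f(p)$ for every $z$ and $f$ is constant. Otherwise $\f$ is an elliptic automorphism, and conjugating by the involution $\f_p$ of Subsection~\ref{subs-simplif} reduces the situation to the rotation $R_\la(z)=\la z$ with $|\la|=1$; the function $\tilde f=f\circ\f_p$ then satisfies $\tilde f(\la z)=\tilde f(z)$. If $\la$ were not a root of unity, the orbit $\{\la^n z_0\}_{n\in\N}$ would be dense on the circle $|z|=|z_0|\subset\D$, so $\tilde f$ would equal the constant $\tilde f(z_0)$ on a subset of $\D$ with interior accumulation points; the identity principle then forces $\tilde f$, and hence $f$, to be constant. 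Therefore $\la$ is a root of unity and $\f$ is a rational elliptic automorphism.

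Now suppose $p\in\partial\D$ and, for contradiction, that $\f$ is not a parabolic automorphism. Conjugating to the half-plane via $T_p$ (Subsection~\ref{subs-simplif}) gives $\widetilde{\f}(w)=Aw+B$ with either $A>1$ (hyperbolic) or $A=1$ and $\text{Re}\,B>0$ (parabolic non-automorphism). I will write $\widetilde{\f}_n(w)$ out explicitly in each sub-case and verify that it tends to $\infty$ inside a Stolz sector at $\infty$, so that $\f_n(z)\to p$ non-tangentially in $\D$ for every $z$. Combining this with the hypothesized existence of $q=\angle\lim_{z\to p}f(z)$ yields $f(z)=f(\f_n(z))\to q$, hence $f\equiv q$ in contradiction with non-constancy. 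The hard part will be pinning down this non-tangential behaviour cleanly enough to see why it genuinely fails in the remaining case of a parabolic automorphism, whose iterates approach $p$ tangentially along horocycles so that the angular-limit argument does not close; this asymmetry is precisely the reason why the parabolic automorphism survives as the only remaining boundary option.
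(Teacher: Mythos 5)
Your proposal is correct and follows essentially the same route as the paper: part (a) via the identity principle on the open set $f(\D)$, and part (b) by iterating $f\circ\f_n=f$, killing the elliptic non-automorphic case with the Denjoy--Wolff theorem, and killing the hyperbolic and parabolic non-automorphic cases by conjugating to the half-plane and checking that the orbits $A^nw+\frac{A^n-1}{A-1}B$ (resp.\ $w+nB$ with $\mathrm{Re}\,B>0$) tend to $\infty$ non-tangentially, so that the hypothesized angular limit forces $f$ to be constant. The only (harmless) divergence is in showing $\la$ is a root of unity for an elliptic automorphism: you use density of the orbit $\{\la^nz_0\}$ on a circle plus the identity principle, whereas the paper reads off $a_n\la^n=a_n$ from the Taylor coefficients of $\f_{f(p)}\circ f\circ\f_p$; both are equally short and valid.
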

\par
\begin{proof}
(a) This is the easy case because \eqref{inter-eq} simply reads
$f=\p\circ f$. Since $f$ is not a constant function, $f(\D)$ is an
open set and for all $w$ in this set we have $w=\p(w)$, which
shows that $\p=id_\D$.
\par\smallskip
(b) Let $\f$ be elliptic; then $p\in\D$. Suppose $\f$ is not an
automorphism. On the one hand, just like earlier, \eqref{inter-eq}
implies that $f\circ\f_n=f$ for all $n\in\N$. On the other hand,
$\f_n\to p\,$ pointwise in $\D$, hence $f\equiv f(p)$ in $\D$.
Since $f\not \equiv const$, it follows that $\f$ must be an
automorphism.
\par
We will now show that $\f$ is rational. Write $\la=\f^{\prime}%
(p)\in\partial\D$. Set $g=\f_{q}\circ f\circ\f_{p.}$ Notice that
$g(0)=0$ and $g(\la z)=g(z)$ for all $z\in\D$. Moreover, let
$\sum_{n=1}^{\infty}a_{n}z^{n}$ be the Taylor series for $g$
around zero. Then
\[
a_{n}\la^{n}=a_{n},\qquad\text{for all }n\in\mathbb{N}.
\]
Since $f$ is not constant, the function $g$ is not identically
zero and we conclude that there exists $n\in\mathbb{N}$ such that
$\la^{n}=1$. Hence $\f$ is rational.
\par
Now suppose that $\f$ is not elliptic and let us show that it must
be a parabolic automorphism. We can transfer the equation
$f\circ\f=f$ to the right half-plane by composing with the Cayley
map associated with $p$ and its inverse as in the proof of
Theorem~\ref{thm-comp}. This leads to the following equation for
the corresponding self-maps $\widehat{f}$, $\widehat{\f}$ of the
right half-plane: $\widehat{f}\circ\widehat{\f}=\widehat{f}$,
where $\widehat{\f}(w) =a w+b$ with $a\ge 1$ and Re\,$b\ge 0$. In
the case $a>1$ it follows by iteration that
\[
 \widehat{f}\(a^n w+\dfrac{a^n-1}{a-1} b\)=\widehat{f}(w) \,,
 \quad w\in\H\,.
\]
Since $\lim_{n\to\infty} a^n w+\dfrac{a^n-1}{a-1} b =\infty$ non-tangentially and the angular limit $\angle \lim_{z\to p} f(z)$ exists (and hence so does $\angle \lim_{w\to \infty} \widehat{f}(w)$), it follows that $f\equiv const$.
\par
Thus, in order that a non-trivial solution to \eqref{inter-eq}
exist, we must have $a=1$, that is, $\f$ must be parabolic. If
Re\,$b>0$ then $w+n b\to\infty$ non-tangentially. Again, by
\eqref{inter-eq} we have that $\widehat{f}(w+n b)=\widehat{f} (w)$
for all $w\in\H$ and all $n\in\N$ and once more it follows that
$f$ is constant. Therefore Re\,$b=0$, so $\f$ is a parabolic
automorphism.
\end{proof}
\par\smallskip
Both situations referred to in part (b) of
Theorem~\ref{thm-comp-1} are actually possible.

\begin{ex} \label{ex-comp-pa}
Let $\f(z)=-z$ and $f(z)=z^2$. Then $\f$ is a rational elliptic
automorphism with $n=2$ and $f=f\circ\f$.
\end{ex}
\begin{ex} \label{ex-comp-rea}
Let
\[
 \f(z)=\frac{2\pi\,i+(1-2\pi\,i)z}{1+2\pi\,i-2\pi\,i\,z} \,, \qquad
 f(z)=e^{-\dfrac12\,\dfrac{1+z}{1-z}} \,.
\]
Then $\f$ is a parabolic automorphism with Denjoy-Wolff point
$p=1$ and $f=f\circ\f$ holds in $\D$.
\end{ex}
\par
Note also that without the assumption on the existence of the
angular limit in (b) the result no longer holds, as the following
example shows.
\begin{ex} \label{ex-log}
Consider the hyperbolic non-automorphic map $\f(z)=(1+z)/2$.
Taking the principal branch of logarithm restricted to the right
half-plane, define
\[
 f(z)=\dfrac12 \exp\(-\dfrac{\pi^2}{\log 2}\) \exp\(\dfrac{2\pi
 i}{\log 2} \log\dfrac1{1-z}\) \,.
\]
The function $f$ is easily seen to map the disk onto a compact
subset of $\D$ and is therefore an elliptic self-map of $\,\D$. It
is also readily verified that $f\circ\f=f$.
\end{ex}
\par
Before stating further results, let us recall that the
\textit{hyperbolic metric\/} in the disk is defined by
\[
 \vr(z,w)=\frac{1}{2} \log\frac{1+|\f_w(z)|}{1-|\f_w(z)|}=
 \inf_\g\int_\g\frac{|d\z|}{1-|\z|^2} \,,
\]
taking the infimum over all rectifiable curves $\g$ in $\D$ from
$z$ to $w$.
\par
Given a holomorphic self-map $g$ of $\D$ and a point $z_0$ in
$\D$, we define the \textit{forward orbit\/} of $z_0$ under $g$ as
the sequence $z_{n}=g_{n}(z_{0})$. It is customary to say that $g$
is of \textit{zero hyperbolic step\/} if for some point $z_{0}$
its iterations $z_{n}=g_{n}(z_{0})$ satisfy the condition
$\lim_{n\to\infty} \vr(z_{n},z_{n+1})=0$. It is well-known that
the word \textquotedblleft some\textquotedblright\ here can be
replaced by \textquotedblleft all\textquotedblright. In other
words, the definition does not depend on the choice of the initial
point of the orbit.
\par
Using the Schwarz-Pick Lemma, it is easy to see that the maps
which are not of zero hyperbolic step are precisely those
holomorphic self-maps $\f$ of $\D$ for which
\[
 \lim_{n\to\infty}\vr(z_{n},z_{n+1})>0 \,,
\]
for some forward orbit $\{z_n\}_{n=1}^\infty$ of $g$, and hence
for all such orbits. This is the reason why they are called
\textit{maps of positive hyperbolic step}. For a survey of these
properties, the reader may consult \cite{CDP2}.
\par
It is easy to show that if $g$ is elliptic and is not an
automorphism, then it is of zero hyperbolic step. If $g$ is
hyperbolic, then it is of positive hyperbolic step. The following
dichotomy holds for parabolic linear fractional maps: every
parabolic automorphism of $\D$ is of positive hyperbolic step,
while all non-automorphic, linear fractional, parabolic self-maps
of $\D$ are of zero hyperbolic step.
\par
\begin{theorem}
\label{thm-comp-2} Let $\f,\p$ be two linear fractional self-maps
of $\D$, both different from the identity, and let $f\in
\ch(\D,\D)$ be conformal at $p$, where $p$ is the Denjoy-Wolff
point of $\f$. Assume that the intertwining equation
$f\circ\f=\p\circ f$ holds. Then:
\begin{enumerate}
\item[(a)] If $\f$ is elliptic non-automorphic, then $\p$ is also
elliptic non-auto\-morphic.

\item[(b)] If $\p$ is elliptic non-automorphic, then the function
$\f$ is either a hyperbolic map or an elliptic non-automorphic
map.

\item[(c)] The map $\f$ is an elliptic automorphism if and only if
$\p$ is also an elliptic automorphism.

\item[(d)] If $\p$ is hyperbolic, then $\f$ is also hyperbolic.

\item[(e)] If $\f$ is hyperbolic, then $\p$ is either a hyperbolic
or an elliptic non-automorphic map. Moreover, if $\f$ is a
hyperbolic automorphism, then $\p$ is either a hyperbolic
automorphism or an elliptic non-auto\-morphic map such that
$\overline{\p(\D)}\cap \partial\D\neq\emptyset$.

\item[(f)] $\f$ is parabolic non-automorphic if and only if $\p$
is also parabolic non-automorphic.

\item[(g)] $\f$ is a parabolic automorphism if and only if $\p$ is
also a parabolic automorphism.
\end{enumerate}
\end{theorem}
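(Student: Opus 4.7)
The proof plan rests on a single multiplier identity extracted from the intertwining and the conformality assumption, followed by a case analysis on the location of the fixed point of $\p$ that corresponds to $p$. Because $\f(p)=p$ gives $\p(f(p))=f(\f(p))=f(p)$, the point $q:=f(p)$ is a fixed point of $\p$; here $f(p)$ is the angular limit when $p\in\partial\D$, whose existence in $\overline{\D}$ is guaranteed by Proposition~\ref{prop-bdry} in every case except $\p$ being an elliptic automorphism, which I handle separately in~(c). Since $\f$ is an LFT analytic at $p$, $\p$ is an LFT analytic at $q$ (its pole lies outside $\overline{\D}\supset\{q\}$), and $f$ has a finite non-zero angular derivative $f^{\prime}(p)$, the chain rule applied to both sides of $f\circ\f=\p\circ f$ at $p$ produces $f^{\prime}(p)\,\f^{\prime}(p)=\p^{\prime}(q)\,f^{\prime}(p)$, whence
\[
\f^{\prime}(p)=\p^{\prime}(q). \qquad (\star)
\]

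Most of the statements follow directly from $(\star)$. For~(a), $p\in\D$ and $|\f^{\prime}(p)|<1$ give $q\in\D$ and $|\p^{\prime}(q)|<1$, so $\p$ is elliptic non-automorphic. For~(c), $|\f^{\prime}(p)|=1$ at the interior point $p$ forces $|\p^{\prime}(q)|=1$ at an interior fixed point of $\p\neq id_\D$, hence $\p$ is an elliptic automorphism; the reverse direction uses that $\f^{\prime}(p)=\p^{\prime}(q)\in\partial\D\setminus\{1\}$ rules out $p\in\partial\D$ (where $\f^{\prime}(p)\in(0,1]$ is real). For~(d), $q\in\partial\D$ forces $p\in\partial\D$, and $\p^{\prime}(q)=\f^{\prime}(p)\in(0,1)$ simultaneously excludes parabolic $\f$ and the possibility that $q$ is a repelling boundary fixed point of $\p$. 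Parts~(b), (e) and the forward halves of~(f), (g) are handled analogously, using that $f(p)\in\overline{\D}$ can never reach a fixed point of $\p$ exterior to $\overline{\D}$ and that the Schwarz-Pick inequality forbids a repelling interior fixed point of $\p$. The reverse implications in~(b), (d), (f), (g) and the converse in~(c) are then obtained by combining $(\star)$ with the direct implications already in hand to exclude every other type.

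The ``moreover'' clause in~(e) requires a separate image-iteration argument. Assume $\f$ is a hyperbolic automorphism, so $\f(\D)=\D$; induction on $f\circ\f=\p\circ f$ gives $f(\D)=\p_{n}(f(\D))\subset\p_{n}(\D)$ for every $n$, and therefore
\[
\emptyset\neq f(\D)\subset\bigcap_{n\in\N}\p_{n}(\D).
\]
If $\p$ were hyperbolic non-automorphic, Julia's lemma makes $\p_{n}(\D)$ a nested sequence of disks tangent to $\partial\D$ at the Denjoy-Wolff point $q$ of $\p$ whose Hausdorff limit is $\{q\}\subset\partial\D$, so the intersection is empty, contradiction. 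If $\p$ were elliptic non-automorphic with $\overline{\p(\D)}\subset\D$, then $\overline{\p(\D)}$ is compact in $\D$, so by the Denjoy-Wolff theorem $\p_{n}\to q$ uniformly on $\overline{\p(\D)}$ and the nested intersection collapses to $\{q\}$, forcing $f\equiv q$, contradiction again. Hence $\p$ must be a hyperbolic automorphism or an elliptic non-automorphic LFT with $\overline{\p(\D)}\cap\partial\D\neq\emptyset$.

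Finally, to separate automorphic from non-automorphic parabolic maps in~(f) and~(g), I would invoke Theorem~\ref{thm-comp}: once both $\f$ and $\p$ have been shown to be parabolic (via $(\star)$ and the preceding cases), their half-plane conjugates are translations $\widetilde{\f}(w)=w+a$ and $\widetilde{\p}(w)=w+b$ with $\mathrm{Re}\,a,\mathrm{Re}\,b\geq 0$, and the argument used there yields $b=c\,a$ with $c>0$ real. Since $c>0$, one has $\mathrm{Re}\,a=0$ if and only if $\mathrm{Re}\,b=0$, which is precisely the criterion separating parabolic automorphisms from parabolic non-automorphisms. The main technical nuisances I expect are a careful justification of the chain rule for angular derivatives at boundary points (automatic here because both $\f$ and $\p$ are LFTs analytic in a neighborhood of $\overline{\D}$) and the shrinking-image assertions in the moreover clause of~(e); neither is serious but both require some care.
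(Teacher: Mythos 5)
Your proposal is correct, and in several places it takes a genuinely different and leaner route than the paper. The common core is the same: Proposition~\ref{prop-bdry} together with the chain rule for angular derivatives gives the multiplier identity $\f^{\prime}(p)=\p^{\prime}(q)$ with $q=f(p)$, and the location of $q$ among the fixed points of $\p$ drives the case analysis. But where the paper invokes hyperbolic-step arguments for (d) and for the forward half of (f) (positive step to force $\f$ into the automorphism-or-hyperbolic class, zero step to force $\p$ out of it), you obtain the same conclusions purely from the multiplier identity and the impossibility of $q$ landing on a fixed point of the wrong location or multiplier; this works and is shorter, at the cost of cross-referencing the earlier parts to run the eliminations. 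The largest divergence is the ``moreover'' clause of (e): the paper gives two separate arguments (a sup-norm argument built on the conjugation $\s\circ\p=a\,\s$ for the elliptic non-automorphic case, and a lengthy half-plane computation to show that hyperbolic $\p$ must be automorphic), whereas your single observation $f(\D)=\p_{n}(f(\D))\subset\p_{n}(\D)$, combined with the facts that $\bigcap_{n}\p_{n}(\D)$ is empty for hyperbolic non-automorphic $\p$ and shrinks to the Denjoy--Wolff point when $\overline{\p(\D)}$ is compact in $\D$, disposes of both subcases at once --- a genuine simplification. Two small points to watch: (i) when $\p$ is an elliptic automorphism, Proposition~\ref{prop-bdry} is unavailable, so the fact that $\angle\lim_{z\to p}f(z)$ is a fixed point of $\p$ (needed for the reverse direction of (c), and hence wherever elliptic automorphisms are excluded by appeal to (c)) must be established by the non-tangential-sequence argument the paper spells out; you flag this but should actually carry it out. (ii) Your forward direction of (c) uses conformality, which is legitimate under the stated hypotheses, though the paper deliberately avoids it there (see its closing remark) so as to obtain that implication without the conformality assumption.
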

\begin{proof}
In what follows, $q$ will always denote the Denjoy-Wolff point of
$\p$.
\par\smallskip
(a) Like in the discussion at the beginning of Subsection~\ref{subs-conform}, we see that $\p$ must also be elliptic and $q=f(p)\in \D$. Moreover, $f^{\prime}(p)\,\f^{\prime}(p)= \p^{\prime}(q)f^{\prime}(p)$. Since $f^{\prime}(p)\neq 0$, we conclude that $|\p^{\prime}(q)|=|\f^{\prime}(p)|<1$. Therefore, $\p$ is an  elliptic and non-automorphic map.
\par\smallskip
(b) We will show that $\f$ is neither a parabolic map nor an
elliptic automorphism.
\par
If $\f$ were parabolic, applying Proposition~\ref{prop-bdry}, the chain
rule for the angular derivative and conformality of $f$ at $p$, we
would obtain $\p^{\prime}(q)=\f^{\prime}(p)=1$, which is
impossible because $\p$ is elliptic and non-auto\-morphic.
Assuming that $\f$ is an elliptic automorphism, one can apply a
completely analogous but easier argument to get a contradiction.
\par\smallskip
(c) We first prove the forward implication. This case is more
delicate than the previous ones because our proof will not use the
conformality of $f$ at $p$. (Note that with that assumption a much
simpler proof is possible.) Since $\f$ is elliptic, $\p$ is also
such. Let us prove that $\p$ is also an automorphism. We have
already established earlier that $f(p)=q$. Consider the function
$g=f\circ\f_{p}$. Then $g(\la z)=\p(g(z))$ for all $z$. It readily
follows by induction that $g(\la^{n}z)= \p_{n} (g(z))$ for all
positive integers $n$ and all $z\in\D$. Observe also that
$g(0)=q$.
\par
Suppose that $\p$ is not an automorphism. Then $\p_{n}\to q$
pointwise. Thus, for each $z\in\D$,
\[
 \left\vert g(\la^{n}z)-g(0)\right\vert =\left\vert \p_{n}
 (g(z))-q\right\vert \to 0 \quad \text{as} \quad n\to\infty.
\]
Since $|\la|=1$, by continuity and a basic compactness argument it
is easy to see that for each $r\in(0,1)$, there is a point
$\xi\in\D$, with $|\xi|=r$, such that $g(\xi)=g(0)$. Therefore,
$g$ is constant, hence so is $f$, which contradicts our
assumption. Thus, $\p$ is an automorphism.
\par
Now for the reverse implication. Suppose that $\f$ is not
elliptic. Choose a sequence $\{z_n\}_{n=1}^\infty$ in the unit
disk that converges to $p$ non-tangentially. Then also $\f(z_n)\to
p\,$ non-tangentially and, using the conformality of $f$ at $p$,
we conclude that the sequence $\{f(\f(z_n))\}_{n=1}^\infty$ is
convergent to some point $\widehat{q}$. Moreover, the sequence
$\{f(z_n)\}_{n=1}^\infty$ also tends to $\widehat{q}$, hence $\p
(\widehat{q})=\widehat{q}$. Since $\widehat{q}\in \overline{\D}$
and $\p$ is an elliptic automorphism, we conclude that
$\widehat{q}$ is the Denjoy-Wolff point of $\p$. Now, applying the
chain rule in the intertwining equation \eqref{inter-eq} and using
again the conformality of $f$ at $p$, we get that $\f^{\prime}(p)=
\p^\prime(q) \in\partial\D\setminus \{1\}$, which contradicts our
assumption that $\f$ is either hyperbolic or parabolic. Therefore,
$\f$ is elliptic.
\par
Once we know that $\f$ is elliptic, we just apply the chain rule
in the intertwining equation together with the conformality of $f$
at $p$ to obtain that $\f^{\prime}(p)=\p'(q)\in \partial \D
\setminus \{1\}$. Thus, $\f$ is an automorphism.
\par\smallskip
(d) We know that
\[
 \vr (\f_{n}(0),\f_{n+1}(0))\ge \vr (f(\f_{n}(0)),f(\f_{n+1}(0)))=
 \vr (\p _{n}(f(0)),\p_{n+1}(f(0))).
\]
Since $\p$ is a hyperbolic linear fractional map, the sequence
\[
 \{\vr (\p _{n}(f(0)),\p _{n+1}(f(0)))\}_{n=1}^{\infty }
\]
converges to a positive real number. Therefore, $\f$ is of
positive hyperbolic step and this implies that $\f$ is either an
elliptic automorphism, a parabolic automorphism, or a hyperbolic
map. It cannot be elliptic since $f(p)=q$ and $\p$ is not
elliptic. Bearing in mind Proposition~\ref{prop-bdry} and using the chain
rule for the angular derivative, we obtain that $f^{\prime}(p)\,
\f^{\prime}(p)=\p^{\prime}(q)f^{\prime}(p)$. Since $f$ is
conformal at $p$, we find that $\f^{\prime}(p)=\p^{\prime}(q)\in
(0,1)$. In particular, $\f$ is not parabolic and, therefore, it is
hyperbolic.
\par\smallskip
(e) This case seems to require the most involved proof by far. We
first observe that $\p$ can neither be a parabolic map nor an
elliptic automorphism.
\par
If $\p$ were a parabolic map, then applying Proposition~\ref{prop-bdry},
the chain rule for the angular derivative, and conformality of $f$
at $p$, we would obtain that $\f^{\prime} (p)=\p^{\prime} (q)=1$,
which is absurd because $\f$ is hyperbolic.
\par
Suppose now $\p$ is an elliptic automorphism. By assumption, $f$ is
conformal at $p$, we know that there exists $\widehat{q}= \angle\lim_{z\to p} f(z)\in\overline{\D}$. Since $\f$ maps non-tangential regions at $p$ into non-tangential regions at
$p$, we deduce that $\p(\widehat{q})=\widehat{q}$. Given that $\p$ has
only one fixed point in $\overline{\D}$, it follows that $\widehat{q}=q$. Therefore, by the Denjoy-Wolff theorem
\[
 \lim_{n\to\infty} |f(\f_n(0))|=|q|=1=\lim_{n\to\infty}
 |\p_n(f(0))|\,.
\]
However, since $\p$ is an elliptic automorphism of $\D$, hence it
is a rotation in hyperbolic metric and so $\sup_{n\in\N}
|\p_n(f(0))|<1$, which is impossible.
\par
We have thus shown that $\p$ must be either a hyperbolic map or an
elliptic non-automorphic map.
\par
Our next step will be to show that if $\f$ is a hyperbolic
automorphism and $\p$ is an elliptic non-automorphic map, then
$\overline{\p(\D)}\cap\partial\D\neq\emptyset$. Suppose, on the
contrary, that $\overline{\p(\D)}\subset\D$. Denote by $q$ the
Denjoy-Wolf point of $\p$. Then replacing $\p$ by the function
$\f_{q}\circ\p\circ\f_{q}$ and $f$ by $\f_{q}\circ f$ if
necessary, we may assume that $q=0$. In this case, we have
$\p(z)=\frac{az}{cz+1}$ and, by a special case of
Proposition~\ref{prop-lft-self} or Lemma~\ref{lem-lft-self}, we
must have $|a|+|c|\leq 1$. Obviously, $a=\p^{\prime}(0)$ and then
$0<|a|<1$. By conformality of $f$ at $p$, deriving the
intertwining equation at the point $p$, we obtain $0<a<1$. If
$a+|c|=1$, then
\[
 \p\left(-\frac{\overline{c}}{|c|}\right)=
 -\frac{\overline{c}}{|c|}\in\partial\D \,.
\]
This contradicts the assumption that $\overline{\p(\D)} \subset
\D$, hence $a+|c|<1$. Write
\[
 \s(z)=\frac{z}{Cz+1}\,, \quad \text{where \ } C=c/(1-a) \,.
\]
From $a+|c|<1$ it follows that $|C|<1$, so $\s$ is bounded on the
unit disk. Moreover, $\s\circ\p=a\,\s$. Define $g:=\s\circ f$.
Then $g$ is an analytic and bounded function in the unit disk and
$g\circ\f=a g$. Since $\f$ is an automorphism, we have
\[
 \sup\{|g(z)|:z\in\D\}=\sup\{|g\circ\f(z)|:z\in\D%
 \}=a\sup\{|g(z)|:z\in\D\}.
\]
But $a<1$ and $g$ is bounded, hence $g(z)=0$ for all $z\in\D$.
This implies that $f(z)=0$, for all $z\in\D$. In other words, $f$
is constant, which is in contradiction with our assumptions.
\par
It only remains to show that if $\f$ is an hyperbolic automorphism
and $\p$ is also hyperbolic, then $\p$ is also an automorphism.
Again, let us denote by $q\in\partial\D$ the corresponding
Denjoy-Wolf point of $\p$ (now, $\p^{\prime}(q)\in(0,1))$.
Consider the rotation $r(z)=p\,\overline{q}\,z$, $z\in\D$ that
maps $q$ to $p$ (note that $r^{-1}(z)=\overline{p}\,q\,z$ is also
a rotation) and define $h:=r\circ f$ and $\widehat{\p}:=
r\circ\p\circ r^{-1}$. A straightforward computation shows that
$\widehat{\p}$ is a hyperbolic map with Denjoy-Wolff point $p$
such that $\widehat{\p}^{\prime}(p)=\p^{\prime}(q)$ and
\[
 h\circ\f=\widehat{\p}\circ h.
\]
Moreover, by the conformality of $f$ at $p$, we see that
$\widehat{\p}^{\prime}(p)=\f^{\prime}(p)$.
\par
From this point on, we change the setting from $\D$ to $\H$ and
consider the analytic functions in the right half-plane given by
$H:=T_{p}\circ h\circ T_{p}^{-1}$, $\F:=T_{p}\circ\f \circ
T_{p}^{-1}$, $\Psi:=T_{p}\circ\widehat{\p}\circ T_{p}^{-1}$.
Bearing in mind that $\widehat{\p }^{\prime}(p)=\f^{\prime}(p)$,
we see that there exist $\a>1$, $b$, $A_1$, and $A_2\in\R$ such
that $b\neq0$, $A:=A_1+iA_2\ne 0$ and $A_1\ge 0$ such that
\[
 H(\alpha w+ib)=\alpha H(w)+A,\quad\text{for all }w\in\H.
\]
In order to show that $\Psi$ is an automorphism of the right
half-plane and thus complete the proof, we need only show that
$A_1=0$.
\par
Iterating, we obtain
\[
 H\(\alpha^{n}w+ib\frac{\alpha^{n}-1}{\alpha-1}\)=\alpha^{n}H(w)+
 A\frac{\alpha^{n}-1}{\alpha-1}
\]
for all $n\in\mathbb{N}$ and for all $w\in \H$. Note that, for all
$w\in\H$, the sequence $\{\alpha^{n}w+ib\dfrac
{\alpha^{n}-1}{\alpha-1}\}_{n=1}^\infty$ tends non-tangentially to
$\infty$. Since $H$ is an analytic self-map of $\H$, by Wolff's well-known theorem (see, for example, \cite[p.~60, Exercise~2.3.10~(b)]{CM}), there exists a constant $c\geq0$ such that
\[
 c=\angle\lim_{w\to\infty}\frac{H(w)}{w}=\lim_{n\to\infty}%
 \frac{H(\alpha^{n}w+ib\frac{\alpha^{n}-1}{\alpha-1})}{\alpha^{n}%
 w+ib\frac{\alpha^{n}-1}{\alpha-1}}.
\]
Therefore, for all $w\in\H$,
\[
 c=\lim_{n\to\infty}\frac{\alpha^{n}H(w)+A \frac{\alpha^{n}-1}{\alpha
-1}}{\alpha^{n}w+ib\frac{\alpha^{n}-1}{\alpha-1}}=\lim_{n\to\infty
}\frac{H(w)+A\frac{1-\alpha^{-n}}{\alpha-1}}{w+ib\frac{1-\alpha^{-n}}
{\alpha-1}}=\frac{H(w)+\frac{A}{\alpha-1}}{w+\frac{ib}{\alpha-1}}.
\]
Hence,
\[
H(w)=cw+\frac{cb}{\alpha-1}i-\frac{A}{\alpha-1},\text{ }w\in\H.
\]
Now, since $H$ is a self-map of $\H$, we conclude that
\[
 0\leq\operatorname{Re}\left(\frac{cb}{\alpha-1}i -\frac{A}{\alpha-1}
 \right)=-\frac{A_1}{\alpha-1} \,.
\]
Our assumption $\alpha>1$ tells us that $A_1\le 0$. We already
know that $A_1\ge 0$, hence $A_1=0$, and we are done.
\par\smallskip
(f) We first prove the forward implication. Recall that a
parabolic linear fractional map which is not an automorphism is of
zero hyperbolic step. Thus, given $z\in\D$, we have that
$\vr(\f_{n}(0),\f_{n+1} (0))\to 0$ as $n\to\infty $. Hence
\[
 \vr(\f_{n}(0),\f_{n+1}(0))\ge \vr(f(\f_{n}(0)),f(\f_{n+1}(0)))=
 \vr(\p_{n}(f(0)),\p_{n+1}(f(0))).
\]
Therefore, $\lim_{n\to\infty}\vr (\p_{n}(f(0)),\p_{n+1}
(f(0)))=0$, which shows that $\p$ is of zero hyperbolic step.
Thus, $\p$ is also a parabolic or an elliptic linear fractional
map different from an automorphism. Bearing in mind statement (b),
we conclude that $\p$ is parabolic non-automorphic.
\par
Now for the reverse implication. From the statements (a), (c), and
(e), we know that $\f$ is parabolic, so we only have to prove that
it is non-automorphic. Applying Theorem~\ref{thm-comp} we deduce
that
\[
 q\p^{\prime\prime}(q) f^{\prime}(p)\frac{p}{q}= p
 \f^{\prime\prime}(p)
\]
and $\text{Re}\,q\p^{\prime\prime}(q)>0$ if and only if
$\text{Re}\,p\f^{\prime\prime}(p)>0$. By recalling the following
general fact about an arbitrary linear fractional parabolic
self-map $\f$ of $\D$: if $r$ denotes its Denjoy-Wolff point, then
always $\text{Re}\,r\f^{\prime\prime}(r)\geq 0$ and $\f$ is an
automorphism if and only if $\text{Re}\,r\f^{\prime\prime}(r)=0$,
the proof is complete.
\par\smallskip
Part (g) is a trivial consequence of the remaining six statements.
\end{proof}
\par\smallskip
Observe that in parts (c) and (g) the maps $\f$ and $\p$ must have
the same type. One would expect a similar statement to hold for
hyperbolic automorphisms. However, this is false.
\begin{ex} \label{ex-hyp-aut}
Let
\[
 f(z)=\frac{i z+1}{-z+2+i}\,, \quad \f(z)=\frac{z+1}2\,, \quad \p(z)=
 \frac{(2+i) z+i}{z+1+2 i}\,.
\]
Then the intertwining equation holds, $\f$ is hyperbolic and
non-automorphic, while $\p$ is a hyperbolic automorphism.
\end{ex}
\par
Theorem~\ref{thm-comp-2} shows a great degree of symmetry
concerning the dynamical types of $\f$ and $\p$ in the
non-automorphic cases as well, except for the statements (b) and
(e) where a mixture of types is allowed. The following examples
tell us that indeed all combinations are possible.
\begin{ex} \label{ex-hyp-ell}
Consider the mappings
\[
 f(z)=(1-z)/2\,, \quad \f(z)=(1+z)/2\,, \quad
\p(z)=z/2\,, \quad z\in\D\,.
\]
Then $\f$ is hyperbolic and non-automorphic with Denjoy-Wolff
point $1$ and $\p$ is elliptic non-automorphic with Denjoy-Wolff
point $0$.
\end{ex}
It should also be mentioned that even if $\f$ is a hyperbolic
automorphism, $\p$ can be an elliptic map.
\begin{ex} \label{ex-hyp-aut-ell}
Let
\[
 \f(z)=\frac{3z+1}{z+3}\,, \quad \p(z)=\frac{z}{2-z},\quad
 f(z)=\frac {1-z}{z+3}\quad\text{for all }z\in\D.
\]
Then $\f$ is a hyperbolic automorphism, $\p$ is elliptic, and
$f\circ\f=\p\circ f$. Of course, in accord with
Theorem~\ref{thm-comp-2}, $\overline{\p(\D)}\cap\partial\D
=\{1\}\neq \emptyset$.
\end{ex}
\par
Conformality is not used to the full extent in all cases in
Theorem~\ref{thm-comp-2}. For instance, in the forward implication
in (c), it is not needed at all.

\section{A rigidity principle and further results on the intertwining
equation}
 \label{sec-fcn-eqns}
The reader should be warned that \eqref{inter-eq} does not always
imply that $f$ must be an LFT (that is, we do not necessarily have
the rigidity principle here), as the following example shows.
\begin{ex} \label{ex-non-univ}
Let $\f(z)=z/2$, $\p(z)=z/4$, and $f(z)=z^2$. It is clear that
$f\circ\f=\p\circ f$ but $f$ is not and LFT; it is not even
univalent.
\end{ex}
Thus, again some additional conditions on $f$ (\textit{e.g.},
local univalence near the Denjoy-Wolff point of $\f$) should be
required in order to get the rigidity. We now analyze all possible
cases.

\subsection{A rigidity principle for intertwining}
\label{subs-pfs-rigid}
The following statement covers the majority of the possible cases regarding the dynamical type of $\f$. The remaining two automorphic cases shall be dealt with in separate subsections.
\par
\begin{theorem}
 \label{thm-inter-rigid}
Let $\f$ be a linear fractional self-map of $\,\D$ which is either
elliptic and non-automorphic, parabolic and non-automorphic, or hyperbolic. Let $p$ be the Denjoy-Wolff point of $\f$ and $f$ a
self-map of $\D$ conformal at $p$. If \eqref{inter-eq} holds for
some linear fractional self-map $\p$ of $\D$ then $\p\neq id_\D$ and $f$ is also a linear fractional map.
\end{theorem}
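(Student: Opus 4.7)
First I would dispose of the case $\p=id_\D$: if $\p=id_\D$ then $f\circ\f=f$, and conformality of $f$ at $p$ ensures the existence of the angular limit $\angle\lim_{z\to p}f(z)$, so Theorem~\ref{thm-comp-1}(b) forces $\f$ to be either a rational elliptic automorphism or a parabolic automorphism, contradicting our standing hypothesis on $\f$. Once $\p\neq id_\D$ is settled, Theorem~\ref{thm-comp-2} pins down the admissible dynamical types of $\p$ in each of the three cases, and in particular the Denjoy-Wolff point $q$ of $\p$ is the (angular) image of $p$ under $f$.

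My plan for the main assertion, inspired by Pommerenke's proof of \cite[Theorem~3]{P1}, is to embed each of $\f$ and $\p$ in a continuous one-parameter semigroup of LFTs and then pass to an ODE. In each of the three admissible dynamical classes the map $\f$ has a unique continuous semigroup $\{\f_t\}_{t\ge 0}$ of LFT self-maps of $\,\D$ with $\f_1=\f$, whose infinitesimal generator
\begin{equation*}
 G_\f(z) := \left.\frac{\partial}{\partial t}\f_t(z)\right|_{t=0}
\end{equation*}
is a polynomial in $z$ of degree at most two; analogous data $\{\p_t\}$ and $H_\p$ are produced for $\p$ using Theorem~\ref{thm-comp-2}. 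The iterated intertwining $f\circ\f_n=\p_n\circ f$ (for all $n\in\N$) then extends from integer $n$ to real $t\ge 0$ by uniqueness of the LFT semigroup embedding, which allows one to match multipliers at $p$ and $q$ via the chain rule in the elliptic and hyperbolic cases, and translation parameters via Theorem~\ref{thm-comp} in the parabolic case. Differentiating the continuous identity $f\circ\f_t=\p_t\circ f$ in $t$ at $t=0$ yields the Cauchy problem
\begin{equation*}
 f'(z)\,G_\f(z) = H_\p\bigl(f(z)\bigr), \qquad f(p)=q.
\end{equation*}

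The remaining step is a uniqueness argument for this ODE. Since $G_\f$ and $H_\p$ are polynomials of degree at most two and conformality of $f$ at $p$ determines $f'(p)$ via the chain rule applied to \eqref{inter-eq}, the initial data are well-defined and the ODE is regular at $p$ (the other zero of $G_\f$ lies outside $\D$ or on $\partial\D$ as a repelling fixed point, where the intertwining supplies matching boundary conditions). A direct computation produces an explicit LFT $f^{*}$ satisfying \eqref{inter-eq} with $f^{*}(p)=q$ and $(f^{*})'(p)=f'(p)$, and the Cauchy--Lipschitz uniqueness theorem yields $f=f^{*}$ on a neighborhood of $p$; analytic continuation along forward orbits of $\f$ then transports the equality $f=f^{*}$ to all of $\D$. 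The principal obstacle I anticipate is the mixed case of Theorem~\ref{thm-comp-2}(e), in which $\f$ is hyperbolic while $\p$ is elliptic non-automorphic: the natural coordinates for the two semigroups live in different ambient spaces (half-plane for $\f$, disk for $\p$), and one must check that pulling back the generator $H_\p$ through $f$ indeed produces a well-posed analytic ODE on $\D$, and that the global analytic continuation argument does not run into the zero of $G_\f$ at the interior attractive fixed point of $\p$.
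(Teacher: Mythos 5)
Your reduction of the case $\p=id_\D$ to Theorem~\ref{thm-comp-1}(b) is exactly what the paper does and is fine. The main argument, however, rests on a premise that is false for part of the theorem's hypotheses: you assert that in each of the three admissible classes $\f$ embeds in a (unique) continuous semigroup of linear fractional self-maps of $\D$. For elliptic non-automorphic maps this fails. The paper itself points out that $\f(z)=-z/(3z+4)$ has no square root mapping $\D$ into itself, and Theorem~\ref{thm-root-ell} and Corollary~\ref{cor-semigr}(ii) show that embeddability of an elliptic non-automorphic LFT requires a nontrivial condition (an infinite chain of square roots staying in a Stolz-type region). Since the theorem must cover, e.g., $\f=\p=-z/(3z+4)$ with $f=id_\D$, your strategy has no generator $G_\f$ to work with in that case. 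There is also a circularity hazard: in this paper the semigroup and root results are deduced from Theorem~\ref{thm-commute}, which in turn rests on Theorem~\ref{thm-inter-rigid}; for the parabolic/hyperbolic cases the embedding is classical and can be supplied independently, but you would need to say so.

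Two further steps do not go through as stated. First, the passage from $f\circ\f_n=\p_n\circ f$ for integer $n$ to $f\circ\f_t=\p_t\circ f$ for all real $t$ does not follow from uniqueness of the two embeddings separately: $f\circ\f_{1/2}$ and $\p_{1/2}\circ f$ are square roots of $f\circ\f=\p\circ f$ with respect to two different factorizations, and nothing identifies them. Second, your Cauchy problem is posed at $p$, which is a zero of $G_\f$ (the generator vanishes at fixed points), so the ODE in normal form $f'=H_\p(f)/G_\f$ is singular there and Cauchy--Lipschitz does not apply; indeed Example~\ref{ex-non-univ} ($\f(z)=z/2$, $\p(z)=z/4$, $f(z)=z^2$) shows uniqueness genuinely fails at that singular point once conformality is dropped, so the conformality hypothesis must enter through a real argument about the singular ODE, not through a routine uniqueness theorem. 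The paper sidesteps both difficulties: it never uses continuous semigroups, but instead differentiates the discrete relation $f\circ\f_n=\p_n\circ f$, normalizes the coefficients of $\f_n$ and $\p_n$, and passes to a subsequential limit (using conformality at $p$ to control $f'(\f_n(z_0))$) to obtain the ODE $f'(z)=\bigl((Af(z)+B)/(Cz+D)\bigr)^2$ with $|C|\le|D|$, and then poses the initial condition at the interior point $0$ (after composing with $\f_{f(0)}$), where the equation is regular and the explicit LFT solution is unique.
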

\par\smallskip
\begin{proof} First of all, observe that if \eqref{inter-eq} is satisfied then part (b) of Theorem~\ref{thm-comp-1} tells us that the case when $\p$ is the identity map is excluded here.
\par
For the sake of simplicity, suppose that the map $\f$ is either parabolic and non-automorphic or hyperbolic. When $\f$ is
elliptic and non-automorphic, the proof carries through by replacing everywhere the words ``converges non-tangentially'' by ``converges''.
\par
Since the sequence $(\f_n(0))$ converges non-tangentially to the
point $p$, it follows that $f^\prime(\f_n(0))\to f^\prime(p)$ as $n\to \infty$.
\par
Let
\[
\f_n(z)=\frac{\a_n z +\b_n}{\g_n z+\d_n}\,, \quad \p_n(z)=\frac{a_nz +b_n}{c_n z+d_n}\,,
\]
where
\[
\a_n\d_n-\b_n \g_n=1\,, \quad a_n d_n - b_n c_n=1\,.
\]
Differentiation of $f\circ\f_n=\p_n\circ f$ yields $f^\prime (\f_n(z)) \f^\prime_n (z) = \p^\prime_n(f(z))f^\prime(z)$ and therefore
\begin{equation}
 \label{eq_thm-inter}
 \frac{f^\prime(\f_n(z))}{(\g _n z+\d_n)^2}=\frac{f^\prime(z)}{(c_n f(z)+d_n)^2}
\end{equation}
for all $n$ and for all $z\in \D$.
\par
Write $c_n^*:=\frac{c_n}{|c_n|+|d_n|}$, $d_n^*:=\frac{d_n}{|c_n|+|d_n|}$,
$\g_n^*:=\frac{\g_n}{|\g_n|+|\d_n|}$, and $\d_n^*:=\frac{\d_n}{|\g_n|+|\d_n|}$.
By passing to convergent subsequences if necessary, we obtain $c_{n_k}^*\to c^*$, $d_{n_k}^*\to d^*$, $\d_{n_k}^*\to \d^*$, and $\g_{n_k}^*\to \g^*$, where $c^*,d^*,\g^*,\d^*\in\overline{\D}$. Since $|c_n^*|+|d_n^*|=1$ and $|c_n^*|<|d_n^*|$ for all $n$, we also get $|c^*|+|d^*|=1$ and $|c^*|\leq|d^*|$. Similarly, $|\g^*|+|\d^*|=1$ and $|\g^*|\leq|\d^*|$. Therefore, $\g^*z+\d^*\neq 0$ and $c^*z+d^*\neq 0$ for all $z\in\D$.
\par
Choose $z_0\in\D$ such that $f(z_0)\neq 0$ and $f^\prime(z_0)\neq 0$. By (\ref{eq_thm-inter}), $f^\prime(\f_n(z_0))\neq 0$ for all $n$. Using (\ref{eq_thm-inter}) for $z$ and $z_0$, we have
$$
\frac{f^\prime(\f_n(z))}{f^\prime(\f_n(z_0))}\left( \frac{\g _n z_0+\d_n}{\g _n z+\d_n} \right)^2
=\frac{f^\prime(z)}{f^\prime(z_0)}\left( \frac{c _n f(z_0)+d_n}{c_n f(z)+d_n} \right)^2.
$$
Thus
$$
\frac{f^\prime(\f_n(z))}{f^\prime(\f_n(z_0))}\left( \frac{\g _n^* z_0+\d_n^*}{\g _n^* z+\d_n^*}
\right)^2 =\frac{f^\prime(z)}{f^\prime(z_0)}\left( \frac{c _n^* f(z_0)+d_n^*}{c_n^* f(z)+d_n^*} \right)^2.
$$
Replacing $n$ by $n_k$, recalling that $f$ is conformal at $p$ and taking limits, we get
$$
\left( \frac{\g ^* z_0+\d^*}{\g ^* z+\d^*} \right)^2 =\frac{f^\prime(z)}{f^\prime(z_0)}\left( \frac{c ^*
f(z_0)+d^*}{c^* f(z)+d^*} \right)^2.
$$
That is, there are complex numbers $A$, $B$, $C$, and $D$ with $|C|\leq |D|$ such that
$$
f^\prime(z)=\left( \frac{Af(z)+B}{Cz+D} \right)^2 \qquad \textrm{for\  all} \quad z\in\D.
$$
Let $a=f(0)$ and consider $g=\f_a\circ f$. A simple computation shows that there exist $A^*,B^*$ such that
$$
g'(z)=\left( \frac{A^*g(z)+B^*}{Cz+D} \right)^2 \qquad \textrm{for\  all} \quad z\in\D.
$$
That is, $g$ is the unique solution of the Cauchy problem $y^\prime=\left( \frac{A^*y+B^*}{Cz+D} \right)^2$ with
$y(0)=0$. Thus, $g(z)=\dfrac{{B^*}^2 z}{(DC-B^*A^*)z+D}$. Since $f=\f_a\circ g$, it follows that $f$ must also be a linear fractional map.
\end{proof}
\par\smallskip
A few comments and examples are in order. Our earlier Example~\ref{ex-non-univ} tells us that, in the elliptic case, the assumption $f^\prime(p)\neq 0$ is essential in order for the conclusion of the theorem to hold. Moreover, the assumption $f^\prime(p)\neq\infty$ cannot be omitted when the Denjoy-Wolff point of $\f$ is on the boundary of the unit disk.
\begin{ex} \label{ex-noncom-infder}
Let
\[
 \f(z)=\frac{5z+3}{3z+5} \,, \quad \p(z)=\frac{3z+1}{z+3} \,, \quad f(z)=
 \frac{\sqrt{\frac{1+z}{1-z}}-1}{\sqrt{\frac{1+z}{1-z}}+1} \,.
\]
It can be checked that \eqref{inter-eq} holds, the Denjoy-Wolff
point of $\f$ is $p=1$, while $\angle\lim_{z\to 1} f(z)=1$ and
$f^\prime(1)=\infty$, even though $f$ is not an LFT.
\end{ex}
The above example is easier to understand after transferring all
the maps to the right half-plane by means of conjugation $g=T\circ
f\circ T^{-1}$ via the Cayley map $T(z)=(1+z)/(1-z)$. In the right
half-plane \eqref{inter-eq} becomes simply $g(4w)=2 g(w)$, where
$g(w)=\sqrt{w}$ is the map that corresponds to $f$ with an
appropriately defined branch of the square root. Notice that in
this example the function $\f$ is hyperbolic and, by
Theorem~\ref{thm-comp}, we know that this situation cannot occur when $\f$ and $\p$ are parabolic.
\par
The following example shows why it was important to exclude the elliptic automorphisms from Theorem~\ref{thm-inter-rigid}.
\begin{ex} \label{ex-non-autom}
Consider the following self-maps of the disk:
\[
 \f(z)=\p(z)=-z \,, \qquad f(z)=\frac{z+z^3}2 \,.
\]
Note that $f(\D)\subset\D$, $p=0$, $f^\prime(0)\neq 0$, and
\eqref{inter-eq} still holds, even though $f$ is clearly not an
LFT.
\end{ex}
\par
We also point out that there is a self-map of $\D$ which commutes
with a parabolic automorphism but is far from being an LFT. This
has been known for some time \cite[Proposition~1.2.6]{A}.

\subsection{The case when $\f$ is an elliptic automorphism}
\label{subs-proofs-ell-aut}
The following characterization seems to be new, in the sense that
it generalizes Proposition~1.2.26 from \cite{A}. It should be
remarked that similar questions for intertwining were raised
explicitly and studied in \cite{PC} in the context of the
classical semi-conjugation. The reader should note the dichotomy
between the conjugations related to the roots of unity and all the
remaining ones.
\par
\begin{theorem} \label{thm-inter-rigid-ell-aut}
Let $\f$ be an elliptic automorphism (different from the identity)
with Denjoy-Wolff point $p$ and $\la=\f^\prime(p)$. Let $\p$ be an
arbitrary linear fractional self-map of $\D$. Suppose $f$ is a
non-constant self-map of $\,\D$ such that $f\circ\f=\p\circ f$ and
define $n_{0}=\min \{n\in\N\,\colon\,f^{(n)}(p)\neq0\}$.
\par
\begin{enumerate}
\item[(i)] If $\p$ is the identity map and $m_0=\min\{n\ge 1
\,\colon\,\la^{n}=1\}$ as in the proof of
Theorem~\ref{thm-comp-1}, then there is a non-constant self-map of
the unit disk $g$, with $g(0)=0$, such that
\[
(\f_{f(p)} \circ f \circ \f_p) (z)=g(z^{m_0}), \quad z\in \D.
\]
Moreover, the function $f$ is never a linear fractional map.
\item[(ii)] Assume $\p$ is not the identity map, $q$ is its
Denjoy-Wolff point, and $\mu=\p^\prime (q)$. Then
$\la^{n_{0}}=\mu$. Also, the following dichotomy takes place:
\begin{enumerate}
 \item If $\la^{n}\neq1$ for every positive integer $n$, then
there exists a point $\b$ in $\overline{\D}\setminus\{0\}$ such
that $(\f_q\circ f\circ\f_p)(z)=\b z^{n_0}$ for all $z\in\D$. In
particular, $f$ is a linear fractional map if and only if
$n_{0}=1$.
 \item If $\la^{n}=1$ for some positive integer $n$, and the
integer $m_0$ is defined as above, then there exists a non-zero
self-map $g$ of $\,\D$ such that
\[
 (\f_q\circ f\circ\f_p)(z)= z^{n_0} g(z^{m_0}), \quad z\in\D .
\]
In particular, $f$ is a linear fractional map if and only if $g$
is constant and $n_0=1$.
\end{enumerate}
\end{enumerate}
\end{theorem}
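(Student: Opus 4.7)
The idea is to normalize so that both sides of the intertwining equation fix the origin. Since $\f(p)=p$, \eqref{inter-eq} forces $q:=f(p)\in\D$ to be a fixed point of $\p$, hence the Denjoy-Wolff point of $\p$ when $\p\neq id_\D$. Conjugating, set $G:=\f_q\circ f\circ\f_p$ and $\Psi:=\f_q\circ\p\circ\f_q$, so that $G(0)=\Psi(0)=0$, $G$ is a non-constant self-map of $\D$, and the intertwining equation translates into
\[
G(\la z)=\Psi(G(z)),\qquad z\in\D,
\]
since $\f_p\circ\f\circ\f_p$ is the rotation $z\mapsto\la z$. Because $\f_p$ and $\f_q$ are biholomorphisms near the relevant fixed points, the order of vanishing of $G$ at $0$ coincides with the integer $n_0$ in the statement.

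\emph{Extracting $\la^{n_0}=\mu$ and identifying $\p$.} Write $G(z)=a_{n_0}z^{n_0}+O(z^{n_0+1})$ with $a_{n_0}\neq 0$ and $\Psi(w)=\mu w+O(w^2)$, where $\mu=\p'(q)$. Comparing lowest-order terms in $G(\la z)=\Psi(G(z))$ gives $a_{n_0}\la^{n_0}=\mu\,a_{n_0}$, hence $\la^{n_0}=\mu$. Since $|\la|=1$, also $|\mu|=1$; combined with the fact that $q\in\D$ is an interior fixed point of the LFT $\p$, this forces $\p$ to be either $id_\D$ (case (i), $\mu=1$) or an elliptic automorphism (case (ii)), with $\Psi(z)=\mu z$. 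In either case, the functional equation simplifies to $G(\la z)=\la^{n_0}G(z)$.

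\emph{Structural analysis via Taylor coefficients.} Factor $G(z)=z^{n_0}h(z)$ with $h(0)=a_{n_0}\neq 0$; substitution gives $h(\la z)=h(z)$. Expanding $h(z)=\sum_{k\geq 0}b_kz^k$ yields $(\la^k-1)b_k=0$ for every $k\geq 0$. In subcase (ii)(a), $\la^k\neq 1$ for all $k\geq 1$ forces $h\equiv\beta:=b_0$, so $G(z)=\beta z^{n_0}$ and the self-map property of $G$ gives $\beta\in\overline{\D}\setminus\{0\}$. Otherwise $\la$ is a primitive $m_0$-th root of unity and only coefficients with $m_0\mid k$ survive, so $h(z)=\tilde g(z^{m_0})$ for some analytic $\tilde g$; hence $G(z)=z^{n_0}\tilde g(z^{m_0})$. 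The bound $|G(z)|<1$ on $\D$, applied via a maximum-modulus argument on circles $|z|=r\nearrow 1$ to the analytic function $F(z):=G(z)/z^{n_0}$, gives $|\tilde g|\leq 1$ on $\D$, so $\tilde g$ is a self-map of $\D$ whenever it is non-constant. Case (i) is handled identically with $\mu=1$, yielding $G(z)=\tilde g(z^{m_0})$ with $\tilde g(0)=0$.

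\emph{LFT characterization.} Any non-trivial LFT vanishing at $0$ has a \emph{simple} zero there, and $f$ is an LFT if and only if $G$ is. Therefore, in case (i), $G$ vanishes to order $\geq m_0\geq 2$, so $f$ can never be an LFT; in subcase (ii)(a), $f$ is an LFT if and only if $n_0=1$; in subcase (ii)(b), $f$ being an LFT forces $G(z)=\beta z$, which requires both $\tilde g$ constant and $n_0=1$. The most delicate point I anticipate is the verification that $\tilde g$ is genuinely a self-map of $\D$ (rather than only of $\overline{\D}$) and the formal identification of $n_0$, as defined by $\min\{n\in\N:f^{(n)}(p)\neq 0\}$, with the order of vanishing of the conjugated function $G$ at the origin.
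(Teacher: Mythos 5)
Your proposal is correct and follows essentially the same route as the paper: conjugate by $\f_p$ and $\f_q$ so that the intertwining equation becomes $h(\la z)=\mu h(z)$ for a self-map $h$ fixing the origin, then read off the structure (and the identity $\la^{n_0}=\mu$) from the Taylor coefficients. The only cosmetic difference is that you identify $\p$ as an elliptic automorphism via the Schwarz lemma applied to $\Psi$ after extracting $|\mu|=1$ from the lowest-order term, whereas the paper invokes its Theorem~\ref{thm-comp-2}(c); both are valid, and your maximum-modulus check that $g$ is a self-map is a harmless addition the paper leaves implicit.
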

\par
\begin{proof}
Keeping in  mind Theorem~\ref{thm-comp-1}, one sees that the proof
of statement (i) is similar to the proof of part (b) in statement
(ii) by setting $n_0=0$. Therefore, we will only present in detail
the proof of the latter case. It is easy to fill in the
corresponding arguments in (i).
\par
Since $\f$ is an elliptic automorphism, it follows by part (c) of
Theorem~\ref{thm-comp-2} that $\p$ is also such. (As remarked in
the proof of that part of the theorem, the assumption on
conformality of $f$ is not needed precisely in this implication.)
\par
Each of the maps $\f_p\circ\f\circ\f_p$ and $\f_q\circ\p\circ\f_q$
is a disk automorphism and fixes the origin, hence a rotation:
$\f_{p}(\f(\f_{p}(z)))=\la z$ and $\f_{q}(\p(\f_{q}(z)))=\mu z$,
for the values $\la$, $\mu$ as defined in the statement of the
theorem and for all $z$ in $\D$. Considering the self-map
$h=\f_{q}\circ f\circ\f_{p}$ of $\D$ which fixes the origin, the
intertwining equation implies that $h(\la z)=\mu h(z)$ for all $z$
in the disk. Let
\[
 h(z)= {\displaystyle\sum\limits_{n=n_{0}}^{\infty}} a_{n}z^{n},
\]
with $a_{n_{0}}\neq0$. Thus we have $a_{n}\la^{n}=a_{n}\mu$ for
all $n\geq n_{0}$. In particular, $\la^{n_{0}}=\mu$ (because
$a_{n_{0}}\neq0$), which completes the proof of the first part of
the assertion.
\par
We now consider the two cases corresponding to the statements (a)
and (b). Observe first that if $a_{n}\neq 0$ then $\la^{n}=\mu
=\la^{n_{0}}$. If there is no $m\neq0$ such that $\la^{m}=1$, then
$n=n_{0}$ since $\la^{n-n_{0}}=1$. This implies that
$h(z)=a_{n_{0}}z^{n_{0}}$, with $0<|a_{n_{0}}|\le 1$ because $h$
is a self-map of the unit disk. This implies (a).
\par
If $\la^{m}=1$ for some non-negative integer $m$, there exists a
non-negative integer $k$ such that $m-n_{0}=k m_{0}$, where $m_0$
is the number defined in the statement of the theorem. Hence there
is a sequence $\{b_{k}\}_{k=0}^{\infty}$ such that $h(z)=
\sum_{k=0}^{\infty} b_{k}z^{n_{0}+km_{0}}$ for all $z$. Then
\[
 \f_{q}(f(z))= \sum_{k=0}^{\infty} b_{k}\f_{p} (z)^{n_{0}+km_{0}} =
 \f_p^{n_0}(z) g(\f_p^{m_0}(z)) \,,
\]
where $g$ denotes the function analytic in $\D$ whose Taylor
coefficients are $b_k$. This equation implies part (b).
\end{proof}

\subsection{The case when $\f$ is a parabolic automorphism}
\label{subs-proofs-parab-aut}
Our last result about intertwining deals with parabolic
automorphisms. In what follows, we will write $\D^\ast$ for the
punctured unit disk $\D \setminus\{0\}$. Also, $\ch (\D^\ast,\D)$
will denote the collection of all analytic functions from the
punctured disk into the unit disk and $\ch (\D^\ast,\C)$ the set
of all functions which are analytic in the punctured disk.
\begin{theorem} \label{thm-inter-rigid-par-aut}
Let $\f$ be a parabolic automorphism with Denjoy-Wolff point $p$,
$\p$ an arbitrary linear fractional self-map of $\,\D$, and $f$  a
self-map of $\,\D$ which is conformal at the point $p$ and such
that $f\circ\f=\p\circ f$.
\begin{enumerate}
 \item[(a)]
If $\p$ is the identity map, then there is a map $g \in \ch
(\D^\ast,\D)$ such that
\[
 f(z)=g\(\exp\(-\frac{2\pi}{|\f^{\prime\prime}(p)|}T_p(z)\)\)\,, \quad z\in\D.
\]
The function $f$ cannot be a linear fractional map.
 \item[(b)] If
$\p$ is not the identity map, denote by $q$ its Denjoy-Wolff point
and write
$\la=\frac{q\p^{\prime\prime}(q)}{p\f^{\prime\prime}(p)}$. Then
$\la\in (0,+\infty)$ and there is a map $g\in\ch (\D^\ast,\C)$
with $g(\D^\ast)\subset\overline{\H}$ such that
\[
 (T_q\circ f)(z)= \la T_p(z)+ g\(\exp\(-\frac{2\pi}{|\f^{\prime\prime}
 (p)|}T_p(z)\)\)\,, \quad z\in\D.
\]
The function $f$ is a linear fractional map if and only if $g$ is
constant.
\end{enumerate}
\end{theorem}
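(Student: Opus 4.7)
The plan is to transfer the intertwining equation to the right half-plane $\H$ via Cayley conjugation, read off an additive relation for the conjugated map, factor it through a universal covering of $\D^{\ast}$, and in part (b) bring in Julia's inequality to pin down the image of the resulting auxiliary function $g$. First I would invoke Theorem~\ref{thm-comp-2}(g) to conclude that $\p$ is either $id_\D$ (case (a)) or itself a parabolic automorphism (case (b)), and recall from Theorem~\ref{thm-comp} that under the Cayley conjugation $\widehat{\f}(w) = w + a$ with $a = p\f^{\prime\prime}(p)$ pure imaginary and $|a| = |\f^{\prime\prime}(p)|$, and similarly $\widehat{\p}(w) = w + b$ with $b = q\p^{\prime\prime}(q)$ pure imaginary in case (b).

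The key device is the holomorphic surjection $\zeta(w) := \exp\bigl(-2\pi w/|\f^{\prime\prime}(p)|\bigr) \colon \H \to \D^{\ast}$, which realizes the universal covering with deck transformation group $a\Z$; in particular $\zeta(w+a) = \zeta(w)$, and every $a$-periodic analytic function on $\H$ factors uniquely through $\zeta$. In part (a), setting $F := f \circ T_p^{-1} \colon \H \to \D$, the equation $f \circ \f = f$ becomes $F(w+a) = F(w)$; the covering argument produces $g \in \ch(\D^{\ast}, \D)$ with $F = g \circ \zeta$, which translates back (via $T_p$) into the stated representation of $f$. For the rigidity assertion in (a), conformality at $p$ forces $f$ to be non-constant, and an injective LFT satisfying $f \circ \f = f$ would force $\f = id_\D$, a contradiction.

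In part (b), Theorem~\ref{thm-comp} ensures that $\la := q\p^{\prime\prime}(q)/(p\f^{\prime\prime}(p)) = b/a$ is a positive real. Putting $F := T_q \circ f \circ T_p^{-1} \colon \H \to \H$, the intertwining equation yields $F(w+a) = F(w) + b = F(w) + \la a$, so $G(w) := F(w) - \la w$ is $a$-periodic and therefore factors as $G = g \circ \zeta$ for a unique $g \in \ch(\D^{\ast}, \C)$. To locate the image of $g$, I would apply Julia's lemma in the half-plane: since Theorem~\ref{thm-comp} identifies $\la$ as the angular derivative of $F$ at $\infty$, Julia's inequality gives $\operatorname{Re} F(w) \ge \la \operatorname{Re} w$ for all $w \in \H$, hence $\operatorname{Re} G \ge 0$ on $\H$, which transfers via surjectivity of $\zeta$ to $g(\D^{\ast}) \subset \overline{\H}$. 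The LFT dichotomy is then routine: $f$ is an LFT if and only if $F$ is an affine self-map of $\H$, which occurs exactly when $G$ is constant, that is, when $g$ is constant.

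The step I expect to demand the most care is showing $g(\D^{\ast}) \subset \overline{\H}$: the naive bound coming from $F(\H) \subset \H$ only yields $\operatorname{Re} G(w) > -\la \operatorname{Re} w$, which is not sharp enough because the periodicity of $G$ prevents one from sampling different real parts above a fixed point of $\D^{\ast}$. Recognizing $\la$ as precisely the angular derivative of $F$ at $\infty$ and then invoking the half-plane Julia inequality is what closes this gap; the rest of the argument is either a routine half-plane transfer or the standard universal-covering factorization.
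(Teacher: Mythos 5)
Your proposal is correct and follows essentially the same route as the paper: Cayley conjugation to $\H$, the additive relation $F(w+a)=F(w)+\la a$, factorization of the $a$-periodic part through the universal covering $w\mapsto\exp(-2\pi w/|\f''(p)|)$ of $\D^\ast$, and the Julia--Wolff inequality $\operatorname{Re}F(w)\ge\la\operatorname{Re}w$ (which the paper phrases as Wolff's infimum characterization of the angular derivative) to get $g(\D^\ast)\subset\overline{\H}$. Your observation that this last step is the genuinely delicate one matches the paper exactly, and your periodicity argument for the LFT dichotomy (an affine $a$-periodic function is constant) is if anything slightly cleaner than the paper's univalence argument.
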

\begin{proof}
We omit the proof of part (a) as its idea is essentially contained
in the proof of (b) given below; just work with $\la =0$.
\par
(b) By Theorem~\ref{thm-comp-2}, we know that $\p$ is also a
parabolic automorphism. Set $g(w)=T_{q}\circ f\circ
T_{p}^{-1}(w)$, $w\in \H$. Clearly, $g$ is a holomorphic self-map
of $\H$ so there exists  $c=\angle \lim_{w\to \infty
}\dfrac{g(w)}{w}\in \lbrack 0,+\infty )$. Moreover, since $f$ is
conformal at $p$, we have that $c>0$ and indeed (see the proof of
Theorem~\ref{thm-comp}) $c=\dfrac{q}{p}\dfrac{\p^{\prime\prime}
(q)}{\f^{\prime\prime}(p)}=\la $.
\par
Wolff's Theorem implies
\[
 c=\inf \left\{ \frac{\text{Re}\, g(w)}{\text{Re}\, w}:w\in
 \H\right\}
\]
and the infimum is attained if and only if $g(w)=cw+i\beta $, for
some real number $\beta $. Note that, in this case, the result
follows by simply considering $g$ to be identically equal to
$i\,\beta $. Thus, let us assume that the above infimum is not
attained. In this situation, $h(w)=g(w)-\la w$ is a holomorphic
self-map of $\H$. We note that the intertwining equation for $f$
transfers to an equation in $g$ in the following way: $g\circ
\widehat{\f}= \widehat{\p}\circ g$, where
\begin{eqnarray*}
 \widehat{\f}(w) &=&T_{p}\circ \f \circ T_{p}^{-1}(w)=w+ia \,, \\
 \widehat{\p}(w) &=&T_{q}\circ \f \circ T_{q}^{-1}(w)=w+ib \,,
\end{eqnarray*}
and $a$ and $b$ are non-zero real numbers. Bearing in mind that
$\la =b/a$, a computation shows that $ h(w+ina)=h(w)$, for all 
$n\in \mathbb{Z}$. In other words, $h$ is automorphic under the 
group $\Gamma $ generated by $\widehat{\f }$. Since $\widehat{\f
}$ is parabolic, the Riemann surface $\H/\Gamma $ is biholomorphic to $\D^{\ast }=\mathbb{D\setminus \{}0%
\mathbb{\}}$ (see \cite[page 24]{A}). Moreover, it is well-known
that
\[
 \pi (w)=\exp \left( -\frac{2\pi }{|a|}w\right) \,, \quad w\in
 \H\,,
\]
defines a covering map $\pi$ from $\H$ onto $\D^{\ast}$ such that
$\pi\circ\widehat{\f }=\pi$. Hence, we can define\ a holomorphic
map $g$ from $\D^{\ast }$ onto $\H$ such that $h=g \circ \pi $.
Finally, note that $ |a|=|p\f
^{\prime\prime}(p)|=|\f^{\prime\prime}(p)|$ so that
\[
 T_{q}\circ f\circ T_{p}^{-1}(w)=\la w+g \left( \exp \left(
 -\frac{ 2\pi }{|\f^{\prime\prime}(p)|}w\right) \right) \,,
\]
as desired.
\par
Trivially, if $g$ is constant, then $f$ is a linear fractional
map. On the other hand, assume that $g$ is not constant and $f$ is
also a linear fractional map. In this case, $\s=T_{q}\circ f\circ
T_{p}^{-1}$ is a linear fractional map in $\H$ that fixes the point at $\infty$. Therefore, $\s(w)=d w+k$, for some $d\geq 0$ and $k\in
\overline{\H}$. Since $g$ is not constant, we must have $d\neq
\la$ so the function defined by $u(w)=\s(w)-\la w$ is univalent in
$\H$. This implies that $g \circ \pi $ is univalent, which is
clearly false. This ends the proof.
\end{proof}
\par\smallskip
\subsection{The intertwining equation and the sets of fixed points}
\label{subs-fixed-pts}
We have already seen that in most cases a rigidity principle holds: a self-map $f$ of $\D$ which solves the intertwining equation $f\circ\f=\p\circ f$ must also be a linear fractional map. However, this condition which is so often necessary, is never sufficient by itself for \eqref{inter-eq} to hold.
\par
We now state and prove, for all possible cases, necessary and
sufficient conditions for a linear fractional self-map $f$ of $\D$
to satisfy the intertwining equation $f\circ\f=\p\circ f$ for two
fixed linear fractional self-maps of the unit disk $\f$ and $\p$.
For this, some additional notation will be useful. Given a linear
fractional self-map $h$ of $\D$, we denote by $\mathrm{Fix}(h)$
the collection of all fixed points of $h$ viewed as a bijective
map of the Riemann sphere $\widehat{\C}$. That is,
$\mathrm{Fix}(h)=\{w\in\widehat{\C}:h(w)=w\}$. We note that,
whenever $h$ is not the identity, $\mathrm{Fix}(h)$ must be either
a singleton or a set consisting of two points.
\begin{theorem} \label{thm-last}
Let $\f,\p,f$ be three linear fractional self-maps of the unit
disk $\D$. Assume that both $\f$ and $\p$ are different from the
identity and denote by $p$ and $q$ their respective Denjoy-Wolff
points.
\begin{enumerate}
 \item
If $\f$ is elliptic, the intertwining equation $f\circ\f=\p\circ f$ holds if and only if $f(\mathrm{Fix} (\f))=\mathrm{Fix}(\p)$ and $\f^{\prime}(p)=\p^{\prime}(q)$.
 \item
If $\f$ is hyperbolic, the equation $f\circ\f=\p\circ f$ takes place if and only if $f(\mathrm{Fix} (\f))=\mathrm{Fix}(\p)$, $f(p)=q$, and $\f^{\prime}(p)=\p^{\prime}(q)$.
 \item
If $\f$ is parabolic, the equation $f\circ\f=\p\circ f$ holds if and only if $f(\mathrm{Fix} (\f))=\mathrm{Fix}(\p)$ and $f^{\prime}(p)\p^{\prime\prime }(p)=\f^{\prime\prime}(p)$.
\end{enumerate}
\end{theorem}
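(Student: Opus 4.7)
My plan is to handle all three cases of Theorem~\ref{thm-last} uniformly by setting $A := f\circ\f$ and $B := \p\circ f$. Both $A$ and $B$ are linear fractional maps, extending to LFT's of $\widehat{\C}$, and the intertwining equation is just the assertion $A=B$. Throughout I will exploit the fact that a non-identity LFT on the sphere is determined by three independent pieces of jet data (for instance, values at three points, or value together with first and second derivative at one point).

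For the necessity direction, if $f\circ\f=\p\circ f$ and $w\in\mathrm{Fix}(\f)$, then $\p(f(w))=f(\f(w))=f(w)$, whence $f(\mathrm{Fix}(\f))\subseteq\mathrm{Fix}(\p)$. Since $f$ is injective on $\widehat{\C}$ and Theorem~\ref{thm-comp-2} forces $\f$ and $\p$ to have matching numbers of fixed points on the sphere (two in cases (1) and (2), one in case (3)), this inclusion is an equality. The relation $f(p)=q$ is automatic in the elliptic case (since $p\in\D$ and $f$ is a non-constant holomorphic self-map of $\D$, so $f(p)\in\D$, and $\p$ has a unique fixed point in $\D$) and in the parabolic case (both fixed-point sets are singletons); in the hyperbolic case it is part of the hypothesis. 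The derivative identities then come from the chain rule applied to $f\circ\f=\p\circ f$ at $p$: canceling $f'(p)\ne 0$ yields $\f'(p)=\p'(q)$ in cases (1) and (2); in case (3) this is trivially $1=1$, and the second-order identity $\f''(p)=f'(p)\p''(q)$ is exactly the formula from Theorem~\ref{thm-comp}.

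For the sufficiency direction, I analyze the auxiliary LFT $C := B^{-1}\circ A$, aiming to show that $C$ is the identity of $\widehat{\C}$. For every $w\in\mathrm{Fix}(\f)$, $A(w)=f(w)$ and $B(w)=\p(f(w))=f(w)$ because $f(w)\in\mathrm{Fix}(\p)$, so $C$ fixes every point of $\mathrm{Fix}(\f)$. In cases (1) and (2), $\mathrm{Fix}(\f)=\{p,p'\}$ has two distinct points, so $C$ is conjugate (via the LFT sending $p,p'$ to $0,\infty$) to $z\mapsto\mu z$; the chain rule, the hypothesis $\f'(p)=\p'(q)$, and $f(p)=q$ give $A'(p)=f'(p)\f'(p)=\p'(q)f'(p)=B'(p)$, hence $C'(p)=1$, forcing $\mu=1$ and thus $C$ is the identity. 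In case (3), the condition $f(\mathrm{Fix}(\f))=\mathrm{Fix}(\p)$ with $|\mathrm{Fix}(\f)|=1$ forces $|\mathrm{Fix}(\p)|=1$, so $\p$ is parabolic with Denjoy-Wolff point $q=f(p)$; hence $\f'(p)=\p'(q)=1$, and $C(p)=p$, $C'(p)=1$ automatically. A direct Taylor expansion gives $C''(p)=(A''(p)-B''(p))/B'(p)$, while the chain rule yields $A''(p)=f''(p)+f'(p)\f''(p)$ and $B''(p)=\p''(q)(f'(p))^2+f''(p)$; the assumed identity $f'(p)\p''(q)=\f''(p)$ is precisely what forces $A''(p)=B''(p)$, hence $C''(p)=0$.

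The main bookkeeping obstacle is the rigidity fact used in the parabolic case: an LFT $C$ with $C(p)=p$, $C'(p)=1$ and $C''(p)=0$ must be the identity. This follows from the normal form of non-identity parabolic LFT's: conjugating by $T(z)=1/(z-p)$ sends $C$ to an affine map $w\mapsto w+\beta$ on $\widehat{\C}$, and a short Taylor expansion at $p$ identifies $\beta=-\tfrac12 C''(p)$, so $C''(p)=0$ forces $\beta=0$ and $C$ is the identity. Once this is in place, cases (1) and (2) reduce to the routine observation that an LFT with two fixed points on the sphere and prescribed derivative at one of them is uniquely determined.
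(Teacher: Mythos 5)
Your proof is correct and follows essentially the same route as the paper: your auxiliary map $C=(\p\circ f)^{-1}\circ(f\circ\f)$ is exactly the map $H$ the paper analyzes (up to conjugating $\mathrm{Fix}(\f)$ to $\{0,\infty\}$), and both arguments conclude by showing this LFT fixes $\mathrm{Fix}(\f)$ pointwise and has the correct jet at $p$. The only cosmetic difference is in the parabolic case, where you extract $C''(p)=0$ by a Taylor expansion at $p$ whereas the paper conjugates to the half-plane and matches translation constants --- the same normal form in different clothing.
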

\begin{proof}
First of all, we prove that if the intertwining equation holds
then always $f(\mathrm{Fix}(\f))=\mathrm{Fix}(\p)$, $f(p)=q$,
$\f^{\prime }(p)=\p^{\prime}(q)$, and, in the parabolic case, also
$f^{\prime}(p) \p^{\prime\prime}(p)= \f^{\prime\prime}(p)$.
\par
An easy computation shows that $f(\mathrm{Fix}(\f))\subseteq
\mathrm{Fix}(\p)$. On the one hand, if $\mathrm{Fix}(\p)$ has two
points, then $\p$ is either elliptic or hyperbolic so, by Theorem
3, we conclude that $\f$ is also elliptic or hyperbolic and,
therefore, $\mathrm{Fix} (\f)$ also has two points. Since $f$ is
univalent, we deduce that $f(\mathrm{Fix}(\f))$ also has two
points, whence $f(\mathrm{Fix} (\f))=\mathrm{Fix}(\p)$. On the
other hand, if $\mathrm{Fix}(\p)$ has just one point, and since
$f(\mathrm{Fix}(\f))$ is non-empty, we obtain that
$f(\mathrm{Fix}(\f))=\mathrm{Fix}(\p)$.
\par
If $\p$ is an elliptic automorphism, then so is $\f$ (see
Theorem~\ref{thm-comp-2}). Therefore, in this case, we clearly
have $f(p)=q$. In the remaining cases, by Proposition~\ref{prop-bdry} and
the continuity of $f$ at $p$, we also conclude that $f(p)=q$.
\par
By taking the derivatives of both sides of the intertwining
equation and evaluating them at $p$, we get $f^{\prime}(p)
\f^{\prime}(p)=\p^{\prime}(f(p))f^{\prime}(p)=\p^{\prime}(q)\,
f^{\prime}(p)$. Since $f$ is conformal at $p$, it follows that
$\f^{\prime}(p)=\p^{\prime}(q)$. If $\f$ is parabolic, then so is
$\p$ (see again Theorem~\ref{thm-comp-2}). By differentiating the
intertwining equation twice, evaluating at $p$, and using the
assumption that $\f$ and $\p$ are parabolic, we arrive at the
formula $f^{\prime}(p)\p^{\prime\prime}(p)=\f^{\prime\prime}(p)$.
\par
We now prove the reverse implication. Suppose initially that $\f$
is either elliptic or hyperbolic. By assumption,
$f(\mathrm{Fix}(\f))=\mathrm{Fix} (\p)$. Since $f$ is injective,
we find that $\mathrm{Fix}(\p)$ has two points. In particular,
$\p$ is either elliptic or hyperbolic. If $\f$ is elliptic, then
$f(p)\in\D$ and it is a fixed point of $\p$. Thus, it follows that
$f(p)=q$. If $\f$ is hyperbolic, it is immediate from the
assumption of the theorem that $f(p)=q$.
\par
Denote by $p_r$ and $q_r$ the remaining fixed points of $\f$ and
$\p$ respectively. Note that $p_{r}$ and $q_{r}$ belong to
$\widehat{\C}\setminus\D$. Set $\beta(z):=z-p$ if $p_{r}=\infty$
and $\beta(z):=\dfrac{z-p}{z-p_{r}}$ if $p_{r}\neq\infty$ and
consider the linear fractional map $H=\beta\circ
f^{-1}\circ\p^{-1}\circ f\circ\f\circ\beta^{-1}$. One can easily
check that $H(0)=0$ and $H(\infty)=\infty$. Therefore, there
exists $\lambda\in\C$, $\lambda \neq0$, such that $H(z)=\lambda
z$. After a differentiation, we obtain
\begin{align*}
 \la & =H^{\prime}(0)=(f^{-1}\circ\p^{-1}\circ
 f\circ\f)^{\prime }(p)=\frac{(f\circ\f)^{\prime}(p)}{(\p\circ
 f)^{\prime}[(\p\circ
 f)^{-1}\circ(f\circ\f)(p)]}\\
 & =\frac{(f\circ\f)^{\prime}(p)}{(\p\circ f)^{\prime}(p)}
 =\frac{f^{\prime}(p)\f^{\prime}(p)}{\p^{\prime}(q)f^{\prime}(p)}
 =\frac{\f^{\prime}(p)}{\p^{\prime}(q)}=1\,.
\end{align*}
Hence $\la=1$, meaning that $H$ is the identity. After some simple
algebraic computations, we deduce that the intertwining equation
holds.
\par
Finally, we consider the remaining case when $\f$ is parabolic.
Since $\mathrm{Fix}(\f)$ has just one point, it is clear that
$\mathrm{Fix}(\p)$ has a unique point and, in particular, $\p$ is
parabolic and $f(p)=q$. Now, consider the linear fractional maps
\[
 \widehat{\f}=T_{p}\circ\f\circ T_{p}^{-1}\,, \quad
 \widehat{\p}=T_{q}\circ\p\circ T_{q}^{-1}\,, \quad F=T_{q}\circ
 f\circ T_{p}^{-1}\,.
\]
These three functions leave invariant the right-half plane and fix
the point $\infty$ on the Riemann sphere. Since $\f$ and $\p$ are
parabolic, there exist two constants $a$ and $b$ such that
$\widehat{\f}(w)=w+a$ and $\widehat{\p}(w)=w+b$. Moreover, there
must exist two other constants $c$ and $d$ such that $F(w)=cw+d$.
Then
\[
 c=\lim_{w\rightarrow\infty}\frac{F(w)}{w}=\lim_{z\rightarrow
 p}\frac {T_{q}(f(z))}{T_{p}(z)}=\lim_{z\rightarrow
 p}\frac{q+f(z)}{q-f(z)}\frac
 {p-z}{p+z}=\frac{q}{p}\frac{1}{f^{\prime}(p)}\,.
\]
From the equation $w+a=T_{p}\circ\f\circ T_{p}^{-1}(w)$, writing
$w=T_{p}(z)$, we obtain that
\[
 (p+z)(p-\f(z))+a(p-z)(p-\f(z))=(p+\f(z))(p-z)\,.
\]
By substituting $z=p$ into the second derivative of both sides of
this equation, we find out that $a=p\f^{\prime\prime}(p)$. In a
similar way, $b=q\p^{\prime\prime}(q)$. Therefore, by assumption,
we deduce that
\[
 c=\frac{q}{p}\frac{1}{f^{\prime}(p)}=\frac{q}{p}\frac{\p^{\prime\prime}
 (p)}{\f^{\prime\prime}(p)}=\frac{b}{a}\,.
\]
Moreover, $F(\widehat{\f}(w))=F(w+a)=c(w+a)+d=cw+ca+d$ and
$\widehat {\p}(F(w))=F(w)+b=cw+d+b$. Since $ca=b$, we conclude
that $F\circ \widehat{\f}=\widehat{\p}\circ F$. After transferring
this equation to the unit disk, we see that the intertwining
equation $f\circ\f=\p\circ f$ holds.
\end{proof}
\par
It is important to underline that, in the above theorem, all the
hypotheses are necessary. More precisely, it is not enough to
assume that $f(\mathrm{Fix} (\f))=\mathrm{Fix}(\p)$ in order to
have the intertwining equation (compare this with the commuting
case given in the beginning of Subsection~\ref{subs-proofs-commut}
below). Some examples are clearly in order here.

\begin{ex} \label{ex-3-lfts}
For the elliptic case, choose $\f(z)=z/2$, $\p(z)=z/4$, and
$f=id_{\D}$. Then $f(\mathrm{Fix}(\f))=\mathrm{Fix}(\p)=
\{0,\infty\}$ but the intertwining equation does not hold.
\par
In the hyperbolic case, consider the functions
\[
 \f(z)=\dfrac{3z+1}{z+3}\,, \quad \p_{\lambda}(z)=\dfrac
 {(\lambda+1)z+\lambda-1}{(\lambda-1)z+\lambda+1} \quad (\lambda\neq
 2)\,, \quad f=id_{\D}\,.
\]
Then $f(\mathrm{Fix}(\f))=\mathrm{Fix}(\p_{\lambda})=\{1,-1\}$ and
again the intertwining equation fails. Notice that, when
$\lambda=4\,$, the Denjoy-Wolff point of both $\f$ and $\p$ is 1
but $\f^{\prime}(1)\neq\p^{\prime}(1)$. Moreover, when
$\lambda=1/2$, the Denjoy-Wolff points of $\f$ and $\p$ are $1 $
and $-1$, respectively, and we have
$\f^{\prime}(1)=\p^{\prime}(-1)$.
\par
Finally, in the parabolic case, define
\[
 \f(z)=\dfrac{(2-i)z+i}{-iz+2+i}\,, \quad
 \p(z)=\dfrac{(1-i)z+i}{-iz+1+i}\,, \quad f=id_{\D}\,.
\]
Now we have that $f(\mathrm{Fix}(\f))=\mathrm{Fix}(\p)=\{1\}$ and
once again the intertwining equation does not hold.
\end{ex}

\section{Some applications}
 \label{sec-applns}

We now give several applications of the results obtained in
Section~\ref{sec-compat-fcn-eqns} and Section~\ref{sec-fcn-eqns}
or of the methods used in their proofs.

\subsection{Commutation}
\label{subs-proofs-commut}
Our first application is the description of the commutant of a
linear fractional self-map of $\D$ other than certain exceptional
automorphisms. Recall that the \emph{commutant\/} of such $\f$ is
the set of all holomorphic self-maps $f$ of $\D$ such that
$f\circ\f=\f\circ f$.
\par
It seems that until now the answer to the commutant question has
been known only in a handful of cases. The following statements
are classical (see \cite[pp.~68--69, Theorem~4.3.5 and
Theorem~4.3.6]{Bea}):
\par
- two LFT's different from the identity commute if and only if
each one of them maps the set of fixed points of the other one
onto itself;
\par
- two LFT's other than the identity and with a common fixed point
in the extended plane $\widehat{\C}$ commute if and only if they
have the same set of fixed points in $\widehat{\C}$ .
\par
Behan \cite{Beh} proved that any two commuting self-maps of $\D$,
other than the identity or a hyperbolic automorphism, must have
the same Denjoy-Wolff point. It should be noted that in the case
of intertwining, even if $\f$ and $\p$ have the same Denjoy-Wolff
point $p$, the Denjoy-Wolff point of $f$ can be different from
$p$, as our Example~\ref{ex-noncom-infder} shows (with $p=1$ being
the common Denjoy-Wolff point of $\f$ and $\p$ and $z=0$ being the
Denjoy-Wolff point of $f$).
\par
An example of a function $f$, not an LFT, that commutes with an
elliptic automorphism other than the identity is given by
Example~\ref{ex-non-autom} and a description of all such maps is
given in \cite[Proposition~1.2.26]{A}. The self-maps of $\D$
commuting with a parabolic disk automorphism $\f$ are also known;
see \cite[Proposition~1.2.27]{A} for the statement adapted to a
half-plane, and also \cite{CDP1}. In this case the commutant
actually admits rather complicated maps, certainly not LFT's. We
will show, however, that the elliptic and parabolic automorphisms
$\f$ are the only possible exceptions.
\par
Having proved the earlier main theorems, we now deduce the result
on the commutant. The reader should note that it is not a mere
corollary of our previous theorems on intertwining. Namely, one
has to consider separately the elliptic and non-elliptic cases and it turns out that the elliptic case does not follow directly from the
corresponding result for intertwining.
\par
\begin{theorem}
 \label{thm-commute}
Let $\f$ be a linear fractional self-map of $\D$ which does not
fall into any of the following categories: the identity, an
elliptic automorphism, or a parabolic automorphism. If $f$ is a
self-map of $\D$ different from a constant or the identity map,
then $f\circ\f=\f\circ f$ holds if and only if $f$ is also an LFT
and has the same set of fixed points in $\widehat{\C}$ as $\f$.
Moreover, $f$ and $\f$ have the same dynamic type.
\end{theorem}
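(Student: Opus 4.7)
The plan is to establish each direction of the equivalence separately. The reverse implication $(\Leftarrow)$ is immediate from the second classical fact recalled at the beginning of this subsection: two non-identity LFT's with a common fixed point in $\widehat{\C}$ commute if and only if they share their fixed point set. So if $f$ is an LFT with $\mathrm{Fix}(f)=\mathrm{Fix}(\f)$, then $f\circ\f=\f\circ f$.

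For the forward implication $(\Rightarrow)$, I would show that $f$ is conformal at the Denjoy-Wolff point $p$ of $\f$ and then invoke Theorem~\ref{thm-inter-rigid} (whose hypotheses are satisfied: $\f$ has one of the three allowed types and $\p:=\f\neq id_\D$). The verification of conformality splits into three subcases. When $\f$ is elliptic non-automorphic, $p\in\D$ and the remark at the start of Subsection~\ref{subs-conform} forces $f(p)$ to be a fixed point of $\f$ in $\D$, hence $f(p)=p$. To see $f'(p)\neq 0$, normalize $p=0$ by conjugating with $\f_p$; the conjugated $\f$ and $f$ have Taylor expansions $az+O(z^2)$ (with $0<|a|<1$) and $c_n z^n+O(z^{n+1})$ (with $c_n\neq 0$), and comparing the lowest-order coefficients in the commutation relation forces $a^{n-1}=1$, which is impossible for $|a|<1$ unless $n=1$. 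When $\f$ is parabolic non-automorphic, Theorem~\ref{thm-comp} applied with $\p=\f$ gives conformality directly (with $f'(p)=1$).

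The hyperbolic case is the most delicate. I would transfer everything to the right half-plane via the Cayley map $T_p$ so that $\hat\f(w)=Aw+B$ with $A>1$ and second fixed point $q'=B/(1-A)\in\widehat{\C}\setminus\H$. Proposition~\ref{prop-bdry} (applied with $\p=\f$) supplies an angular derivative $\mu\in\widehat{\C}\setminus\{0\}$ of $f$ at $p$, so I need only rule out $\mu=\infty$, equivalently show that the angular derivative $c$ of $\hat f$ at $\infty$ satisfies $c\neq 0$. Iterating the commutation relation gives $\hat f(A^n w+B_n)=A^n \hat f(w)+B_n$ with $B_n=B(A^n-1)/(A-1)$. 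Since $A^n w+B_n\to\infty$ non-tangentially in $\H$, dividing both sides by this quantity and letting $n\to\infty$ yields $c=(\hat f(w)-q')/(w-q')$ for every $w\in\H$. If $c=0$, then $\hat f\equiv q'$, contradicting $\hat f(\H)\subset\H$ and $q'\notin\H$. Hence $c>0$ and $f$ is conformal at $p$.

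With conformality established in each subcase, Theorem~\ref{thm-inter-rigid} concludes that $f$ is an LFT. Moreover $f(p)=p$ in all three cases (directly in the elliptic case, and from Proposition~\ref{prop-bdry} combined with the continuity of the LFT $f$ at $p$ in the other two), so the classical fact cited above yields $\mathrm{Fix}(f)=\mathrm{Fix}(\f)$ in $\widehat{\C}$. The agreement of dynamic types then follows by inspecting the location of these common fixed points: a singleton on $\partial\D$ corresponds to parabolic; two fixed points with one in $\D$ to elliptic; and two fixed points with the Denjoy-Wolff point on $\partial\D$ and the other in $\widehat{\C}\setminus\D$ to hyperbolic. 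The main obstacle is the hyperbolic subcase in proving conformality, which requires the iteration-in-the-half-plane argument sketched above; the elliptic and parabolic non-automorphic subcases reduce quickly to the preceding theorems of the paper.
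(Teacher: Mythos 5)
Your proposal is correct, and while the elliptic subcase coincides with the paper's argument (your lowest-order coefficient comparison giving $a^{n-1}=1$ is the same computation the paper performs with $\f(z)=z/(cz+d)$, arriving at $d=d^N$; you merely merge its two subcases $c=0$ and $c\neq 0$), the non-elliptic part follows a genuinely different route. The paper organizes the proof by the dynamic type of $f$ rather than of $\f$ and obtains conformality at $p$ in one stroke from Behan's theorem: either $p$ is the Denjoy--Wolff point of $f$, whence $0<f'(p)\le 1$, or $f$ is a hyperbolic automorphism fixing $p$, whence $f'(p)$ is finite and nonzero. You avoid Behan entirely, using Theorem~\ref{thm-comp} (with $\p=\f$) in the parabolic case and a self-contained half-plane iteration in the hyperbolic case. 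Your hyperbolic argument is sound --- the sequence $A^nw+B_n$ does tend to $\infty$ non-tangentially, Wolff's theorem identifies the limit of the left-hand quotient with $c$, and the resulting identity $c=(\hat f(w)-q')/(w-q')$ rules out $c=0$ --- and it is in the spirit of the computations the paper itself carries out in Theorem~\ref{thm-comp} and in part (e) of Theorem~\ref{thm-comp-2}. What each approach buys: the paper's is shorter but imports an external result (Behan), and it silently covers the possibility that a hyperbolic automorphism $f$ has a Denjoy--Wolff point different from $p$; yours is longer but self-contained, and your identity $c=(\hat f(w)-q')/(w-q')$ actually shows $\hat f$ is affine, so in the hyperbolic case you get the rigidity conclusion directly without even invoking Theorem~\ref{thm-inter-rigid}. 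Your explicit derivation of the fixed-point statement and of the equality of dynamic types from the location of the common fixed points is also more complete than the paper's, which leaves those to the classical facts quoted at the start of the subsection.
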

\begin{proof}
Denote by $p$ the Denjoy-Wolff point of $\f$. By Behan's Theorem
\cite{Beh}, either $p$ is the Denjoy-Wolf point of $f$, or $f$ is
a hyperbolic automorphism and $p$ is a fixed point of $f$. Thus,
if $f$ is either parabolic or hyperbolic but not an automorphism,
we have that $0<f^\prime(p)\le 1$, hence $f$ is conformal at $p$.
If $f$ is a hyperbolic automorphism then $f^\prime$ is finite,
hence $f$ is again conformal. In either one of these cases, the
result follows from Theorem~\ref{thm-inter-rigid}.
\par
In the elliptic case, it suffices to show that $f^\prime(p)\neq 0$ for the Denjoy-Wolff point $p$ of $\f$ and the statement will again follow from Theorem~\ref{thm-inter-rigid}. To this end, without loss of generality we may assume that $p=0$. (Otherwise consider the conjugate application $F=\f_p\circ f\circ\f_p$ with the fixed point at the origin which also satisfies $F^\prime (0)=f^\prime(p)$.) We can, thus, start off with the assumption that $\f(z)=z/(cz+d)$. Note that $d\neq 0$ since $f\not\equiv const$. Also, it follows by either  Proposition~\ref{prop-lft-self} or Lemma~\ref{lem-lft-self} that $|d|\neq 1$. Now there are two possibilities.
\par\medskip
1) $c=0$. In this case, $f(z/d)=f(z)/d$ for all $z$ in $\D$. This is readily seen to imply that $f(z)=a_1 z$ for some non-zero constant $a_1$ (recalling that $f\not\equiv const$ and $|d|\neq 1$), for example, either by using the power series development or by using the following argument. Differentiation yields $f^\prime(z/d)=f^\prime(z)$; hence by induction
$f^\prime(z/d^n)=f^\prime(z)$ for all $z\in\D$ so, by the uniqueness principle for analytic functions it follows that $f^\prime\equiv const$.
\par\medskip
2) $c\neq 0$. Suppose that, on the contrary, $f^\prime(0)=0$. Then there exists $N\geq 2$ and $a_N\neq 0$ such that
\[
 f(z)=\sum_{n=N}^\infty a_n z^n = a_N z^N + o(z^N) \,.
\]
In this context, as is usual, by $o(z^N)$ we mean any function $g$ analytic in $\D$ for which $\lim_{z\to 0} g(z)/z^N=0$. By a standard geometric series formula,
\[
\f(z)=\frac{z}{cz+d}=\frac{z}{d} + o(z)\,.
\]
Thus, on the one hand
\[
 f(\f(z)) = a_N \f(z)^N + o(\f(z)^N) = \frac{a_N}{d^N} z^N + o(z^N)
\]
and, on the other hand,
\[
 \f(f(z)) = \frac{f(z)}{d} + o(f(z)) = \frac{a_N}{d} z^N + o(z^N)\,.
\]
By equating the $N$-th coefficients in the last two formulas, it follows that $d=d^N$ which is impossible in view of the assumptions $d\neq 0$, $|d|\neq 1$ and $N\ge 2$. This completes the proof.
\end{proof}
\par
Obviously, in the extreme case when $f$ is a constant map, it
commutes with $\f$ if and only if $f$ is identically equal to a
fixed point of $\f$ but since we only admit those maps $f$ for
which $f(\D)\subset\D$, this can only happen in the case when $\f$
is elliptic.
\par
The fact that the commutation equation $f\circ\f=\f\circ f$ implies that $f$ is an LFT whenever $\f$ is such, discarding the exceptional cases mentioned earlier, was also obtained in \cite[Theorem~2]{JRS}
by using completely different methods.
\subsection{Roots}
\label{subs-proofs-roots}
The result about the commutant has an immediate application to the
questions of existence of the $n$-th ``roots'' of a given linear
fractional map, that is, of those self-maps $f$ of $\D$ such that
$f_n=\f$. It is trivial but important to observe that the $n$-th
iterate of an automorphism is trivially an automorphism. Also, an
arbitrary iterate of a self-map of $\D$ always has the same type
as the map itself.
\par
A disk automorphism of any type (other than the identity) is
easily seen to have an $n$-th root for arbitrary $n$ and this root
must also be an automorphism. Indeed, a trivial argument with
bijections shows that if $f$ maps $\D$ into itself, $f_n=\f$, and
$\f$ is a bijection of the disk, then $f$ must also be a bijective
map of $\D$.
\par
The identity map is again exceptional for it can be shown to have
uncountably many $n$-th roots for any $n$. One also easily checks
that, given any point $p\in\D$, if $\la$ denotes any $n$-th root
of unity different from $1$ and $\f_p$ is defined as in
\eqref{inv-autom}, then every disk automorphism of the form
$\f(z)=\f_p(\la \f_p(z))$ is an $n$-th root of $id_\D$ and $p$ is
its fixed point, hence $\f$ is elliptic. It is easy to check that
each pair $(p,\la)$ defines a different map, so there are indeed
uncountably many of them. It is not difficult to see that all
$n$-th roots of the identity map have to be elliptic automorphisms
by transferring the problem to the right half-plane if necessary.
\par
For other linear fractional self-maps $\f$ of $\D$ it turns out
that a solution $f$ to the equation $f_n=\f$ exists, it must be an
LFT and have the same type as $\f$. That is, another rigidity
principle holds here. The results are as follows.
\par
\begin{theorem}
 \label{thm-root-hp}
For any parabolic or hyperbolic linear fractional map $\f$, the
equation $f_n=\f$ has a unique solution. This solution $f$ is
parabolic whenever $\f$ is parabolic and hyperbolic whenever $\f$
is such.
\par
In either case, the formula for the solution is obtained as
follows: if $\tau$ denotes the Denjoy-Wolff point of $\f$, the map
$\widehat{\f}=T_\tau\circ \f \circ T_\tau^{-1}$ becomes simply
$\widehat{\f}(w)=A w + B$, a self-map of $\H$ with $A\ge 1$ and
Re\,$B\ge 0$. Denote by $\a$ the only positive $n$-th root of $A$.
Then
\[
 f(z) = T_\tau^{-1}(g(T_\tau(z)) = \tau \frac{g \( \frac{\tau+z}{\tau-z}
 \)-1}{g \( \frac{\tau+z}{\tau-z} \)+1} \,,
\]
where
\[
 g(w) = \a w + \frac{B}{\sum_{k=0}^{n-1} \a^k} \,.
\]
\end{theorem}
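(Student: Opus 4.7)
The natural strategy is to transfer $\f$ to the right half-plane via the Cayley map $T_\tau$, where $\tau$ is the Denjoy-Wolff point of $\f$, and exploit the fact that $\widehat{\f}=T_\tau\circ\f\circ T_\tau^{-1}$ is affine: $\widehat{\f}(w)=Aw+B$ with $A\ge 1$ and $\mathrm{Re}\,B\ge 0$. Both existence and uniqueness will be established in these coordinates and then transported back.

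For existence, I would look for $\widehat{f}$ in affine form $\widehat{f}(w)=\alpha w+\beta$. A straightforward induction yields
\[
(\widehat{f})_n(w)=\alpha^n w+\beta\sum_{k=0}^{n-1}\alpha^k,
\]
so matching $(\widehat{f})_n=\widehat{\f}$ forces $\alpha^n=A$ and $\beta\sum_{k=0}^{n-1}\alpha^k=B$. Taking $\alpha$ to be the unique positive real $n$-th root of $A$ (so $\alpha\ge 1$) and $\beta=B/\sum_{k=0}^{n-1}\alpha^k$, one has to verify $\widehat{f}(\H)\subset\H$; this reduces to $\mathrm{Re}\,\beta\ge 0$, which is clear since $\sum_{k=0}^{n-1}\alpha^k>0$ and $\mathrm{Re}\,B\ge 0$. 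Transferring back, $f=T_\tau^{-1}\circ\widehat{f}\circ T_\tau$ is the self-map of $\D$ described in the statement.

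For uniqueness, let $f\in\ch(\D,\D)$ satisfy $f_n=\f$. Then $f$ cannot be constant (else $f_n$ would be) nor equal $id_\D$ (else $\f=id_\D$), and
\[
f\circ\f=f\circ f_n=f_{n+1}=f_n\circ f=\f\circ f,
\]
so $f$ commutes with $\f$. If $\f$ is hyperbolic or parabolic non-automorphic, Theorem~\ref{thm-commute} immediately yields that $f$ is an LFT sharing the fixed-point set of $\f$ in $\widehat{\C}$, so in particular $f(\tau)=\tau$. If $\f$ is a parabolic automorphism, Theorem~\ref{thm-commute} does not apply directly; however, $\f$ is now a bijection of $\D$, and $f_n=\f$ forces $f$ itself to be bijective, hence a disk automorphism, and so an LFT. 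The classical fact that two commuting non-trivial Möbius transformations share their fixed points then gives $f(\tau)=\tau$ as well, since $\tau$ is the only fixed point of $\f$ on $\widehat{\C}$. In every scenario, $\widehat{f}=T_\tau\circ f\circ T_\tau^{-1}$ is a Möbius transformation of $\widehat{\C}$ fixing $\infty$, so $\widehat{f}(w)=\alpha'w+\beta'$; the self-map property $\widehat{f}(\H)\subset\H$ forces $\alpha'>0$ real and $\mathrm{Re}\,\beta'\ge 0$. Matching $(\widehat{f})_n=\widehat{\f}$ yields $\alpha'^n=A$ and $\beta'\sum_{k=0}^{n-1}\alpha'^k=B$; the first equation together with positivity gives $\alpha'=\alpha$, and then $\beta'=\beta$. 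The dynamic-type assertion follows automatically, since $\alpha=1$ iff $A=1$ iff $\f$ is parabolic, and $\alpha>1$ iff $A>1$ iff $\f$ is hyperbolic.

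The main subtlety of this plan is the parabolic automorphism case, which is explicitly excluded from Theorem~\ref{thm-commute}; the simple remedy is the elementary remark that an $n$-th root of a bijection is a bijection, which lands $f$ among the disk automorphisms, and hence among the LFT's, for free.
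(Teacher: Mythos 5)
Your proposal is correct and follows essentially the same route as the paper: derive commutation from $f_n=\f$, invoke Theorem~\ref{thm-commute} (with the bijection remark handling the parabolic automorphism case) to conclude $f$ is an LFT fixing $\tau$, transfer to the half-plane where everything is affine, and match coefficients, with positivity of the $n$-th root forcing uniqueness. Your treatment is, if anything, slightly more explicit than the paper's about why $\widehat{f}$ must fix $\infty$ in the parabolic automorphism case, but the argument is the same.
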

\begin{proof}
By virtue of the equation $f_n=\f$, it is immediate that $f$ and
$\f$ commute:
\[
 f\circ\f = f_{n+1} = \f\circ f \,.
\]
Hence, whenever $\f$ is not a parabolic automorphism,
Theorem~\ref{thm-commute} shows that $f$ is also an LFT. If $\f$
is a parabolic automorphism, each one of its roots is again a disk
automorphism, as observed earlier.
\par
Since in all possible cases both $\f$ and $f$ are linear
fractional transformations, the equation $f_n=\f$ becomes much
easier to solve in the right half-plane $\H$. After applying the
Cayley transform $T_\tau$, the equation becomes
$g_n=\widehat{\f}$, where the functions $\widehat{\f}=T_\tau\circ
\f \circ T_\tau^{-1}$ and $g= T_\tau \circ f \circ T_\tau^{-1}$
are self-maps of $\H$. As observed earlier, $\widehat{\f}$ can be
written in the form $\widehat{\f}(w)=A w+B$, with Re\,$B \geq 0$
always and with $A=1$ in the parabolic case and $A>1$ in the
hyperbolic case. Since $f$ is also an LFT with the same
Denjoy-Wolff point as $\f$, the function $g$ will have a similar
representation: $g(w)=a w+b$, $a\ge 1$ and Re\,$b\ge 0$.
\par
These representations allow us to find readily the formula for the
$n$-th iterate $g_n$. After identifying the coefficients of
$\widehat{\f}$ with those of $g_n$ in the equation
$g_n=\widehat{\f}$, we get the system of equations
\[
 a^n=A\,, \qquad b (1+a+a^2+\ldots+a^{n-1})=B \,.
\]
Writing $\a$ for the unique positive $n$-th root of $A$, we see
that
\[
 g(w) = \a w + \frac{B}{\sum_{k=0}^{n-1} \a^k}
\]
is one solution of the equation $g_n=\widehat{\f}$. It is actually
the only one. Indeed, with any other $n$-th root of $A$ instead of
the positive root $\a$, the map $g$ would no longer be a self-map
of the right half plane.
\par
Note also that the maps $\widehat{\f}$ and $g$ are of the same
type, since $\a$ and $A$ are simultaneously equal to, or greater
than, one.
\end{proof}
\par\smallskip
The situation is radically different for elliptic maps. It can be
seen that, for example, the function $\f(z)=-z/(3z+4)$ has no
square roots that map $\D$ into itself, while the automorphism
$\f(z)=-z$ has exactly two: $f_1(z)=i z$ and $f_2(z)=-i z$. The
problem is that, even though one can formally solve an equation
for obtaining a linear fractional root of a given LFT, this root
need not map the disk into itself. However,
Proposition~\ref{prop-lft-self} or Lemma~\ref{lem-lft-self} will
allow us to control effectively the number of solutions of the
equation $f_n=\f$ in the case of an elliptic map different from an
automorphism.
\par
\begin{theorem}
 \label{thm-root-ell}
If $\f$ is an elliptic linear fractional self-map of $\,\D$ other
than the identity and with Denjoy-Wolff point $p$, then the map
$\widehat{\f}=\f_p\circ\f \circ\f_p$ has the form
$\widehat{\f}(z)=\frac{Az}{Cz+1}$ with $|A|+|C|\leq 1$ and $A\neq
1$. The number of solutions of the equation $f_n=\f$ equals the
cardinality of the set
\[
 \{ a\in\C\,\colon\, a^n=A \text{ and } |C(1-a)|\le (1-|a|)\cdot
 |1-A|\,\} \,.
\]
Furthermore, each solution of the equation $f_n=\f$ is determined
by one such root $\,a$ and is again an elliptic LFT of the form
$f=\f_p\circ g\circ\f_p$, where
\[
 g(z)=\frac{a z}{\frac{C(1-a)}{1-A} z+1} \,.
\]
In particular, if $\f$ is an elliptic automorphism other than the
identity, there are exactly $n$ different solutions of the
equation $f_n=\f$.
\end{theorem}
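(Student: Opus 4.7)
The plan is to conjugate so that the Denjoy-Wolff point of $\f$ lies at the origin, to show that any solution $f$ of $f_n=\f$ must itself be an LFT whose conjugate fixes $0$, and then to solve the resulting algebraic equations explicitly and impose the self-map condition via Lemma~\ref{lem-lft-self}.

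First I would set $\widehat{\f}=\f_p\circ\f\circ\f_p$ and $\widehat{f}=\f_p\circ f\circ\f_p$, which reduces $f_n=\f$ to the equivalent equation $\widehat{f}_n=\widehat{\f}$ with $\widehat{\f}(0)=0$. Normalising the denominator of $\widehat{\f}$, one writes $\widehat{\f}(z)=Az/(Cz+1)$ with $A=\widehat{\f}'(0)=\f'(p)$. Then $A\neq 0$ because $\widehat{\f}$ is non-constant, and $A\neq 1$ because $\f'(p)=1$ at an interior fixed point $p$ would force $\f=id_\D$ by Schwarz--Pick rigidity. Applying Lemma~\ref{lem-lft-self} in the special case $b=0$, $d=1$ yields the self-map condition $|A|+|C|\leq 1$, and the subcase of equality with $|A|=1$ is exactly the elliptic automorphism case ($C=0$).

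Next I would show that any $f$ satisfying $f_n=\f$ is itself an LFT fixing $p$, so $\widehat{f}(z)=az/(cz+1)$ for some $a\neq 0$ and $c\in\C$ with $|a|+|c|\leq 1$. The key point is that $f\circ\f=f_{n+1}=\f\circ f$, so $f$ commutes with $\f$. When $\f$ is not an elliptic automorphism, Theorem~\ref{thm-commute} applies directly and forces $f$ to be an LFT with the same fixed point set as $\f$; in particular $\widehat{f}(0)=0$. When $\f$ is an elliptic automorphism, the bijectivity observation made at the start of Section~\ref{subs-proofs-roots} forces $f$ to be an automorphism as well, and evaluating the commutation relation $\widehat{f}(Az)=A\widehat{f}(z)$ at $z=0$ with $A\neq 1$ gives $\widehat{f}(0)=0$, so $\widehat{f}$ is a rotation. (That $f$ has a fixed point in $\D$ which must coincide with $p$ follows from $f_n(p)=\f(p)=p$ together with the Denjoy--Wolff theorem: a boundary Denjoy-Wolff point of $f$ would be incompatible with the constant subsequence $f_{kn}(p)=p$.)

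Finally I would compute the $n$-th iterate of the candidate $g(z)=az/(cz+1)$ by a short induction, obtaining
\[
g_n(z)=\frac{a^n z}{c(1+a+\cdots+a^{n-1})z+1}.
\]
Matching coefficients with $\widehat{\f}(z)=Az/(Cz+1)$ yields $a^n=A$ together with $c(1+a+\cdots+a^{n-1})=C$. The condition $A\neq 1$ forces $a\neq 1$, so the second equation rearranges to $c=C(1-a)/(1-A)$, precisely the formula in the statement. Substituting into the self-map condition $|a|+|c|\leq 1$ (Lemma~\ref{lem-lft-self} applied to $g$) gives $|C(1-a)|\leq(1-|a|)|1-A|$, the inequality characterising admissible values of $a$, and different admissible $a$'s give different solutions since $a=g'(0)$. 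In the elliptic automorphism subcase $C=0$ and $|A|=1$, the inequality collapses to $0\leq 0$, every $n$-th root of $A$ is admissible, and the corresponding $n$ solutions $f$ are all rotations. The principal obstacle is the step identifying $f$ as an LFT, since Theorem~\ref{thm-commute} explicitly excludes the elliptic automorphism case; this forces the bifurcation described above, with the automorphism subcase handled by the separate bijectivity-plus-commutation argument. Once $\widehat{f}$ has been reduced to the form $az/(cz+1)$, the remainder of the argument is a direct calculation.
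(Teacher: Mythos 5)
Your proposal is correct and follows essentially the same route as the paper: conjugate by $\f_p$ to place the fixed point at the origin, use $f\circ\f=f_{n+1}=\f\circ f$ together with Theorem~\ref{thm-commute} (respectively the bijectivity-plus-rotation argument in the elliptic automorphism subcase) to force $\widehat f(z)=az/(cz+1)$, then match coefficients in $g_n=\widehat\f$ and impose the self-map criterion of Lemma~\ref{lem-lft-self} to obtain exactly the stated admissibility condition on $a$. Your explicit remarks that distinct admissible roots $a=g'(0)$ yield distinct solutions and that $\sum_{j=0}^{n-1}a^j=(1-A)/(1-a)\neq 0$ are welcome small clarifications of points the paper leaves implicit.
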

\par
\begin{proof}
Let $\f$ be an elliptic linear fractional self-map of $\D$ with
Denjoy-Wolff point $p$ in $\D$. Denoting by $\f_p$ the
automorphism given by \eqref{inv-autom} which is an involution and
interchanges $p$ and the origin, it is immediate that the map
$\widehat{\f}=\f_p\circ\f\circ\f_p$ is an elliptic LFT with the
Denjoy-Wolff point $z=0$. The map $\widehat{\f}$ in this case can
be written as $\widehat{\f}(z)=\frac{Az}{Cz+1}$ and by applying
either Proposition~\ref{prop-lft-self} or
Lemma~\ref{lem-lft-self}, or working directly with appropriate
inequalities, we readily see that it is a self-map of $\D$ if and
only if $|A|+|C|\leq 1$.
\par
Assuming that $\f$ is not an automorphism and using
Theorem~\ref{thm-commute} again, we can show as before that every
possible solution of the equation $g_n=\widehat{\f}$ must have the
form $g(z)=\frac{a\,z}{c\,z+1}$. By matching the coefficients in
the equation $g_n=\widehat{\f}$ as before, we get the system of
equations
\begin{equation}
 a^n=A\,, \qquad c (1+a+a^2+\ldots+a^{n-1})=C \,.
 \label{syst}
\end{equation}
Note that for any $n$-th root $a$ of $A$, the sum
$\sum_{j=0}^{n-1} a^j\neq 0$. Otherwise we would have
$0=1-a^n=1-A$ and $C=0$, meaning that $\widehat{\f}$ and therefore
also $\f$ is the identity map, the case excluded from the start.
Thus, we can solve the system of equations \eqref{syst} and infer
that the functions of the form
\[
 g(z)=\frac{a z}{ {\frac {C} {\sum_{j=0}^{n-1}a^j}}z+1}
 =\frac{a z}{\frac{C(1-a)}{1-A} z + 1} \,,
\]
where $a$ denotes any one of the $n$-th roots of $A$, are the only
possible solutions of the equation $g_n=\widehat{\f}$. Reasoning
as before, such a map $g$ will be a self-map of $\D$ if and only
if
\begin{equation}
 |C (1-a)|\le (1-|a|)\cdot |1-A| \,.
 \label{root-cond}
\end{equation}
Therefore, the solutions of the equation $g_n=\widehat{\f}$, if
any, will be only those maps $g$ as above for which inequality
\eqref{root-cond} is verified. This easily leads to the formula
for the solutions of the equation $f_n=\f$ given in the statement
of the theorem.
\par
Now let $\f$ be an elliptic automorphism. The map
$\widehat{\f}=\f_p\circ \f\circ\f_p$ is an automorphism that fixes
the origin, hence a rotation: $\widehat{\f}(z)=\la z$,
$\la\in\partial\D\setminus\{1\}$. The map $f$ (and therefore also
$g$) is a disk automorphism other than the identity. Since $g$ and
$\widehat{\f}$ commute and $\widehat{\f}$ fixes the origin, the
same must be true of $g$, hence it is also a rotation. Thus, if
$\mu$ is any $n$-th root of $\la$, it is immediate that the
rotation $g(z)=\mu z$ is a solution of the equation
$g_n=\widehat{\f}$ and there can be no other solutions.
\end{proof}

\subsection{A remark on the Koenigs embedding problem for semigroups}
 \label{subs-proof-sgr}
\par\smallskip
We finally address some aspects of the so-called \textit{Koenigs
embedding problem\/} for semigroups of analytic functions. Recall
that an indexed family $\cg=\{g_t\,\colon\,t\in [0,\infty)\}$ of
analytic self-maps of $\D$ is said to be a \textit{semigroup} if
it is closed and additive under composition: $g_s\circ
g_t=g_{s+t}$ for all $s$, $t\in [0,\infty)$ and the function
$t\mapsto g_t$ is \textit{strongly continuous}, meaning that
$g_s\to g_t$ uniformly on the compact sets in $\D$ as $s\to t$.
Note that the additivity condition implies that $g_0$ is the
identity mapping. Much about semi-groups of analytic functions can
be found in the survey article \cite{Si} and in the monograph
\cite{Sho}.
\par
In relation to the elliptic case, it is clear that there are
semigroups of automorphisms in which the identity equals $g_t$ for
more than one value of $t$ (for example, when $g_{t/2}$ coincides
with some automorphism $\f_p$, an involution).
\par
The following is a natural and fundamental question about
semi-groups of analytic self-maps of the disk: when does such a
semi-group contain a linear fractional map? The answer has
recently been given in \cite[Theorem~3.2]{BCD}: this happens if and only if each member of the semi-group $\cg=\{g_t\,\colon\,t\in
[0,\infty)\}$ is a linear fractional map. Moreover, all of its members $g_t$ with $t\neq 0$ are linear fractional maps of the
same type as the given LFT. This can be seen as follows from our
findings here. Suppose that $\cg$ contains an LFT (not the
identity), say $\f=g_t$, for some $t>0$. As long as $\f$ is not an
elliptic or parabolic automorphism, the commutation relation
\[
 g_s\circ g_t=g_{s+t}=g_t\circ g_s
\]
and Theorem~\ref{thm-commute} will imply that any other member
$g_s$ of $\cg$ must also be an LFT and of the same type as $\f$.
If $\f$ is an automorphism, the conclusion follows in a different
but simple manner and is well known.
\par
The corollary below provides further details about embedding an
LFT into a semigroup. The first part of the statement (on the
uniqueness of the semigroup that contains a given LFT) must be
known to the experts in various cases and may have simpler proofs
as well. We state it here because it does not seem easy to give an
explicit reference that would cover all the cases, and also
because of the novelty of the method of proof. The main idea here
is to recover the unique semigroup from just one of its elements
by extracting successive square roots, thus relying on our
Theorem~\ref{thm-root-hp} and Theorem~\ref{thm-root-ell}. The same
idea appears in the second part, which refers to the elliptic
case. Our method also yields a criterion somewhat different from
the analytic condition presented in \cite{JRS}, \cite{KRS}, and
\cite[Proposition~5.9.5]{Sho}, as well as from the geometric criterion
given most recently in \cite[Proposition~3.4]{BCD}.
\par
\begin{corollary}
 \label{cor-semigr}
The following assertions hold for semigroups of self-maps of the
unit disk.
\begin{enumerate}
 \item[(i)] Given a parabolic or hyperbolic linear fractional
self-map $\f$ of $\D$ and a positive number $s$, there is a unique
semigroup $\cg=\{g_t\,\colon\,t\in [0,\infty)\}$ of self-maps of
$\,\D$ such that $g_s=\f$. Every elliptic disk automorphism is a
member of some semigroup (not necessarily unique).
\par
\item[(ii)] Every elliptic, non-automorphic, linear fractional
self-map $\f$ of $\,\D$ is conjugate to a map of the form $(Az)/(C
z+1)$, where $|A|+|C|\le 1$, as was pointed out in
Theorem~\ref{thm-root-ell}. Such a map $\f$ belongs to some
semigroup of self-maps of the disk if and only if there exists a
sequence $\{a_n\}_{n=1}^\infty$ such that $a_0=A$, every $a_n$
($n\ge 1$) is one of the square roots of the number $a_{n-1}$, and
all $a_n$ belong to the non-tangential approach (Stolz type)
region
\[
 S=\left\{ z\in\D\,\colon\,\frac{|C|}{|1-A|} |1-z|\le 1-|z|
 \right\}
\]
with vertex at $z=1$.
\end{enumerate}
\end{corollary}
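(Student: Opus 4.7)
\begin{pf}
The plan for (i) rests on the density of dyadic multiples of $s$ in $[0,\infty)$ together with strong continuity of $t\mapsto g_t$. From the semigroup law, $g_{s/2^n}$ is a $2^n$-th iterate root of $g_s=\f$. Since $\f$ is parabolic or hyperbolic, Theorem~\ref{thm-root-hp} provides exactly one such root in $\ch(\D,\D)$, so $g_{s/2^n}$, and hence every dyadic iterate $g_{ks/2^n}=(g_{s/2^n})_k$, is determined by $\f$ alone; strong continuity then forces $g_t$ to be uniquely determined at every $t\ge 0$. For the existence assertion about elliptic automorphisms, I would conjugate $\f$ to a rotation $\widehat{\f}(z)=\la z$, pick any $\t\in\R$ with $e^{is\t}=\la$, and take $g_t(z)=e^{it\t}z$ (conjugated back); the multiple admissible choices of $\t$ witness that uniqueness genuinely fails in this case.

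For part (ii), I would first conjugate so that the Denjoy-Wolff point is $0$ and $\f(z)=Az/(Cz+1)$ with $|A|+|C|\le 1$. Assuming $\f=g_s$, each $g_{s/2^n}$ commutes with $\f$, and since $\f$ is elliptic non-automorphic, Theorem~\ref{thm-commute} forces $g_{s/2^n}$ to be an LFT. Writing the conjugated map as $\widehat{g}_{s/2^n}(z)=A_n z/(C_n z+1)$, Theorem~\ref{thm-root-ell} applied at each binary step yields $A_n^2=A_{n-1}$ together with the recursion $C_n=C_{n-1}(1-A_n)/(1-A_{n-1})$. The telescoping identity $1-A=(1-A_n)\prod_{k=1}^n(1+A_k)$ collapses this recursion to $C_n=C(1-A_n)/(1-A)$, and the self-map condition $|A_n|+|C_n|\le 1$ becomes exactly $A_n\in S$. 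Putting $a_n:=A_n$ produces the required sequence.

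Conversely, starting from a sequence $\{a_n\}$ as in the statement, I would define $\widehat{g}_n(z)=a_n z/(c_n z+1)$ with $c_n:=C(1-a_n)/(1-A)$; the hypothesis $a_n\in S$ guarantees $\widehat{g}_n\in\ch(\D,\D)$, and the formula for $c_n$ combined with $a_n^2=a_{n-1}$ gives $(\widehat{g}_n)_2=\widehat{g}_{n-1}$, so the family is consistent across the binary levels. Setting $\widehat{g}_{ks/2^n}:=(\widehat{g}_n)_k$ then defines a semigroup on the dyadic multiples of $s$; to upgrade this to a strongly continuous one-parameter family I would use equicontinuity of self-maps of $\D$ on compact subsets (via Schwarz-Pick) and pass to limits along the dense dyadic grid, transferring the composition law to the limit, and then conjugate back to obtain the semigroup containing $\f$.

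The main obstacle I anticipate is this last step. The algebraic core—the telescoping together with the equivalence $A_n\in S\Leftrightarrow |A_n|+|C_n|\le 1$—is short and transparent, but extending the dyadic skeleton $\{\widehat{g}_{ks/2^n}\}$ to a strongly continuous semigroup requires a careful limiting argument that preserves both the composition law and membership in $\ch(\D,\D)$. Once the analytic extension is in place, verification of the remaining semigroup axioms is routine.
\end{pf}
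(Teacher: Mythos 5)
Your proposal is correct and follows essentially the same route as the paper: unique iterated square roots (Theorem~\ref{thm-root-hp}, Theorem~\ref{thm-root-ell}) determine the dyadic skeleton $g_{ks/2^n}$, the telescoping of the recursion for $C_n$ reduces the self-map condition $|A_n|+|C_n|\le 1$ to $a_n\in S$, and density of dyadic rationals plus strong continuity finishes the argument. The limiting step you flag as the main obstacle is real but routine (the dyadic maps are explicit LFTs whose coefficients vary continuously, so convergence and preservation of the composition law are immediate), and the paper itself passes over it with no more detail than you give.
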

\begin{proof}
In order to prove assertion (i) of the theorem, let us assume that
$\f$ is either hyperbolic or parabolic. The function $\f$ will
always have a square root which is an LFT of the same type as
$\f$. In view of the rule $g_{t/2} \circ g_{t/2}=g_t=\f$, this
root must be equal to $g_{t/2}$. Since the root is again a map of
the same type as $\f$, it will have a unique root itself, and this
root is again an LFT of the same type as $\f$; we may proceed
inductively to conclude that every member of $\cg$ of the form
$g_{t/2^n}$ is determined uniquely by $\f$. By composition, $g_{m
t/2^n}$ is also determined in a unique fashion for all
non-negative integers $m$, $n$. The dyadic rational numbers
$m/2^n$ are dense in $[0,\infty)$, so by strong continuity every
member of the semigroup $\cg$ is determined uniquely by the
initial map $\f$.
\par
The statement for elliptic automorphisms can be proved in an
analogous way, except for the uniqueness part: $\f$ may have more
than one root but exactly one of these roots must equal $g_{t/2}$,
so pick that one and proceed as before to show that all maps $g_{m
t/2^n}$ are elliptic automorphisms. Finally, a standard arguments
involving Hurwitz's theorem shows that any locally uniform limit
of such maps is also univalent or a constant map. Since a
semigroup cannot contain constant maps, it follows that the limit
function is univalent; it follows rather easily that it must
actually be an elliptic automorphism.
\par
Let us now prove (ii). If an elliptic non-automorphic map $\f$
belongs to a semigroup $\cg$, say $\f=g_t$ with $t>0$, then
$g_{t/2}$ must be one of the possible square roots of $\f$ as
described in Theorem~\ref{thm-root-ell}. But we have seen that the
only possible square roots of the elliptic non-automorphic
function $(A z)/(C z+1)$ have the form
\[
 \frac{a z}{\frac{C (1-a)}{1-A}z+1} \,, \qquad a^2=A \,,
\]
where $|C (1-a)|\le (1-|a|) |1-A|$. An easy computation shows that
every possible square root of this root must have the form
\[
 \frac{b z}{\frac{C (1-b)}{1-A}z+1} \,, \qquad b^2=a \,,
\]
and is a self-map of $\D$ if and only if $|C (1-b)|\le (1-|b|)
|1-A|$. Thus, both the form of the map and the condition for being
a self-map are completely analogous to the previous inequality,
the number $a$ now being replaced by its square root $b$. This
allows us to conclude inductively that the sequence
$\{a_n\}_{n=1}^\infty$ defined by $a_0=A$ and $a_n$ being equal to
one of the square roots of the number $a_{n-1}$, $n\ge 1$, has the
required property: all numbers $a_n$ belong to the region
\[
 S=\left\{ z\in\D\,\colon\,\frac{|C|}{|1-A|} |1-z|\le 1-|z| \right\}
 \,.
\]
It should be pointed out that when $C\neq 0$ we can never get the
identity map in this process of root extractions. In the case
$C=0$, the region $S$ degenerates into the entire unit disk, so
the condition of belonging to it is trivially fulfilled and the
roots of $A z$ are easily found to be $a z$, where $a^n=A$.
\par
Conversely, if there exists a sequence $\{a_n\}_{n=1}^\infty$ as
above, its every member $a_n$ will satisfy condition
\eqref{root-cond} for the existence of the root of order $2^n$ of
the initial map $\f$, which allows us to proceed in the same way
as in the other cases and determine the unique semigroup $\cg$ to
which $\f$ belongs by determining all members $g_{t/2^n}$ first.
\end{proof}
\par
Note that in the above condition the sequence
$\{(1-|a_n|)/|1-a_n|\}_{n=0}^\infty$ is decreasing by the
definition of $a_n$ and the elementary inequality
\[
 \frac{1-|a_n|}{|1-a_n|} =
 \frac{1-|a_{n+1}|^2}{|1-a_{n+1}^2|} \ge
 \frac{1-|a_{n+1}|}{|1-a_{n+1}|}\,.
\]

\bigskip

\end{document}